\documentclass[10pt, reqno]{amsart}
\usepackage{amssymb}
\usepackage{graphicx}
\usepackage{xcolor} 
\usepackage{fullpage} 
\usepackage{amsmath}
\usepackage{amsthm}
\usepackage{verbatim}
\usepackage{enumitem}
\usepackage{amsaddr}
\DeclareFontFamily{U}{mathx}{\hyphenchar\font45}
\DeclareFontShape{U}{mathx}{m}{n}{
	<5> <6> <7> <8> <9> <10>
	<10.95> <12> <14.4> <17.28> <20.74> <24.88>
	mathx10
}{}
\DeclareSymbolFont{mathx}{U}{mathx}{m}{n}
\DeclareFontSubstitution{U}{mathx}{m}{n}
\DeclareMathAccent{\widecheck}{0}{mathx}{"71}
\DeclareMathAccent{\wideparen}{0}{mathx}{"75}

\usepackage{enumitem}
\setlist[enumerate]{leftmargin=1.5em}
\setlist[itemize]{leftmargin=1.5em}

\setlength{\marginparwidth}{.6in}
\setlength{\marginparsep}{.2in}

\usepackage{seqsplit}

\newtheorem{theorem}{Theorem}
\newtheorem{corollary}{Corollary}[theorem]
\newtheorem{lemma}{Lemma}[section]

\newtheorem{remark}{Remark}[section]

\numberwithin{equation}{section}

\newcommand{\norm}[1]{{\left\Vert #1 \right\Vert}}
\newcommand{\normif}[1]{{\left\Vert #1 \right\Vert}_{L^{\infty}}}

\newcommand{\normb}[1]{{\left\Vert #1 \right\Vert}_{L^2}}

\newcommand{\RN}[1]{%
  \textup{\uppercase\expandafter{\romannumeral#1}}%
}

\allowdisplaybreaks

\begin{document}

\title{Finite-time blow-up in hyperbolic Keller-Segel system of consumption type with logarithmic sensitivity}

\author{Jungkyoung Na} \thanks{Department of Mathematics, Seoul National University. \newline  E-mail address: njkyoung09@snu.ac.kr}

\date{\today}
\footnotetext{\emph{2020 AMS Mathematics Subject Classification:} 92B05, 35Q92 }
\footnotetext{\emph{Keywords:} Chemotaxis; Keller-Segel; Consumption type; Finite-time blow-up; Logarithmic sensitivity }

\maketitle

\begin{abstract}
This paper deals with finite-time blow-up of a hyperbolic Keller-Segel system of consumption type with the logarithmic sensitivity 
\begin{equation*}
    \partial_{t} \rho = -  \chi\nabla \cdot \left (\rho \nabla \log c\right),\quad \partial_{t} c = - \mu c\rho\quad (\chi,\,\mu>0)
\end{equation*}
in $\mathbb{R}^d\; (d \ge1)$ for nonvanishing initial data. 
This system is closely related to tumor angiogenesis, an important example of chemotaxis.
Our singularity formation is not because $c$ touches zero (which makes $\log c$ diverge)  but due to the blowup of $C^1\times C^2$-norm of $(\rho,c)$.   As a corollary, we also construct initial data near any constant equilibrium state which blows up in finite time for any $d\ge1$.    
\end{abstract}

\section{Introduction}
Chemotaxis refers to the motion of biological cells toward a higher (or lower) concentration of some chemical substance. Understanding and analyzing its mechanism is crucial since it describes many ubiquitous biological and ecological phenomena. After its first mathematical modeling by Patlak (\cite{P53}) and Keller-Segel (\cite{KS70, KS71}),  there have been many variations in their models to explain realistic phenomena more precisely.
One of the greatly important examples of chemotaxis is tumor angiogenesis, the new blood vessel formation induced by tumor cells. To be precise, 
by releasing vascular endothelial growth factor (VEGF), tumor cells induce endothelial cells to make new blood vessels toward them. In order to model the mechanism of tumor angiogenesis, the authors of \cite{LSN00} proposed the following system:
\begin{equation}\label{P-KS}
\left\{
\begin{aligned}
	&\partial_{t} \rho = \kappa\Delta \rho - \chi\nabla \cdot \left (\rho \nabla \log c\right), \\
	&\partial_{t} c = -\mu c \rho. \\
\end{aligned}
\right.
\end{equation}
 Here, $\rho$ denotes the density of endothelial cells, and $c$ represents the concentration of VEGF that is consumed by the cells. $\kappa>0$ denotes the diffusion coefficient of the cells, and $\chi,\,\mu>0$ represent the intensity of chemotaxis and the consumption rate of VEGF, respectively.

In this paper, we derive finite-time singularity formation of the following Cauchy problem for a hyperbolic counterpart of \eqref{P-KS}:
\begin{equation}\label{H-KS}
\left\{
\begin{aligned}
	&\partial_{t} \rho = - \chi \nabla \cdot \left (\rho \nabla \log c\right), \\
	&\partial_{t} c = -\mu c \rho, \\
	&\rho(x,0)=\rho_{0}(x), \; \, c(x,0)=c_{0}(x),
\end{aligned}
\right.
\end{equation}
for $\left (x, t\right) \in \mathbb{R}^d \times \left (0,T\right)$ with $d\ge1$. 
Our finite-time blow-up is not attributed to $c\rightarrow 0$, which makes $\log c \rightarrow \infty$, but due to divergence of $C^1\times C^2$-norm of $(\rho,c)$. Moreover, our result considers initial data not touching zero, where in the case of initial data vanishing at some point, finite-time blow-up can be shown using propagation of certain degeneracy (\cite{IJ21}).
To effectively discuss the motivation for dropping $\Delta \rho$ and the significant meanings of our results, we first review some related previous results.

\subsection{Previous works}\label{subsection previous works}
There have been a lot of studies on the long-time dynamics of \eqref{P-KS}. In addition to its biological significance, \eqref{P-KS} contains a mathematically interesting structure: logarithmic sensitivity, $\log c$. Since $\log c$ diverges at $c=0$, this sensitivity has given difficulties in analyzing \eqref{P-KS}.
To overcome these, the Cole-Hopf transformation
 \begin{equation}\label{C-H trans}
     q=-\frac{\nabla c}{c}=-\nabla \log c,
 \end{equation}
and scalings
\begin{equation*}\label{scaling}
    x\rightarrow \frac{\sqrt{\chi\mu}}{\kappa}x,\quad t\rightarrow \frac{\chi \mu}{\kappa}t, \quad
    q\rightarrow \sqrt{\frac{\chi}{\mu}}q
\end{equation*}
have been used to transform \eqref{P-KS} into
\begin{equation}\label{P-KS'}
    \left\{
    \begin{aligned}
        &\partial_t \rho =\Delta \rho+ \nabla \cdot (\rho q), \\
        &\partial_t q = \nabla \rho.
    \end{aligned}
    \right.
\end{equation}
We briefly review the main results of \eqref{P-KS'}.
Global existence of classical solutions to \eqref{P-KS'} in a one-dimensional bounded domain was obtained for small initial data in \cite{ZZ07}, and for large initial data in \cite{TWW13} and \cite{LZ15}. 
The paper \cite{LZ15} also showed that the solutions converge to their boundary data at an exponential rate as time goes to infinity. In the one-dimensional whole line $\mathbb{R}$, the authors of \cite{GXZZ09} established the global existence of classical solutions to \eqref{P-KS'} for large initial data. Furthermore, the authors of \cite{LPZ15} showed that the global classical solutions for large initial perturbations around constant equilibrium states in $\mathbb{R}$ converge to the equilibrium states as time approaches infinity. The authors of \cite{LW09} and \cite{LW10} proved the nonlinear stability of traveling wave solutions, while the authors of \cite{LWZ13} showed the stability of composite waves.
In multidimensional space, global existence and long-time behaviors of classical solutions to \eqref{P-KS'} were obtained for initial data near a constant equilibrium state. In two or three-dimensional bounded domain $\Omega$, the paper \cite{LPZ12} obtained the global existence and large-time asymptotic behavior of classical solutions for initial data $(\rho_0,q_0)$ satisfying $\norm{(\rho_0-avg(\rho_0),q_0)}_{(H^3 \times H^3)(\Omega)}\ll 1$, where $avg(\rho_0)$ denotes the average of $\rho_0$ over $\Omega$.
The authors of \cite{LLZ11} showed global well-posedness and long time behavior of classical solutions in $\mathbb{R}^d$ $(d\ge2)$ when $\norm{(\rho_0-\bar{\rho},q_0)}_{(H^s \times H^{s})(\mathbb{R}^d)}\ll 1$ with $s>\frac{d}{2}+1$ for some constant $\bar{\rho}>0$.
The paper \cite{H12} generalized these results to the critical Besov space $B^{\frac{d}{2}-2}(\mathbb{R}^d)\times(B^{\frac{d}{2}-2,\frac{d}{2}-1}(\mathbb{R}^d))^d$ with $d\ge2$ under the assumption that
$\norm{(\rho_0-\bar{\rho},q_0)}_{B^{\frac{d}{2}-2}\times(B^{\frac{d}{2}-2,\frac{d}{2}-1})^d}\ll 1$ is sufficiently small for some constant $\bar{\rho}>0$. In $\mathbb{R}^3$, the authors of \cite{DL14} established global well-posedness of classical solutions when $\norm{(\rho_0-\bar{\rho},q_0)}_{(L^2 \times H^1)(\mathbb{R}^3)} \ll 1$ for some constant $\bar{\rho}>0$. Moreover, for the initial data satisfying $\norm{(\rho_0-\bar{\rho},q_0)}_{(H^2 \times H^1)(\mathbb{R}^3)}\ll 1$, the paper \cite{DL14} further derived decay property of the solutions. 
Later, the authors of \cite{WXY16} obtained asymptotic decay rates of classical solutions in $\mathbb{R}^d$ with $d=2,3$ under assumptions that $(\rho_0-\bar{\rho}, q_0)\in ((H^{2}\cap \dot{H}^{-s})\times (H^{2}\cap \dot{H}^{-s}))(\mathbb{R}^d)$ for some $s\in(0,\frac{d}{2})$ and $\norm{(\rho_0-\bar{\rho},q_0)}_{H^1 \times H^1}\ll 1$.
Recently, there have been studies of long-time behaviors of multi-dimensional solutions to \eqref{P-KS'} with smallness assumptions only on the low frequency part of initial data. On the unit square or cube with various types of boundary conditions, the authors of \cite{RWWZ19} showed global existence of classical solutions and convergence to equilibrium states by only assuming the smallness of entropic energy. In $\mathbb{R}^d$ $(d=2,3)$, the authors of \cite{WWZ21} and \cite{LWWWZ21} established the global well-posedness
and long-time behavior results for initial data potentially having large
$L^2$-energy.

\medskip

We can compare \eqref{P-KS} with the following classical counterpart:
 \begin{equation}\label{cP-KS}
\left\{
\begin{aligned}
&\partial_{t} \rho = \kappa \Delta \rho - \chi\nabla \cdot \left (\rho \nabla \log c\right), \\
	&\partial_{t} c = \mu c^{\lambda} \rho,
\end{aligned}
\right.
\end{equation}
where the main difference from \eqref{P-KS} is that $\rho$ does not consume but produces $c$. Depending on the range of $\lambda$, finite-time blow-up or global well-posedness of \eqref{cP-KS} have been studied by many authors. Since our main concerns are focused on the consumption type, we only list some references in this direction. See \cite{LS97, LS01, YHL01} for finite-time singularity formation when $\lambda=1$, and \cite{YHL01} for global regularity when $\lambda=0$. Some blow-up criteria and asymptotic behaviors of solutions can be found in \cite{KSV10, JK14}, and references therein. Refer to \cite{HA97} for numerical investigation.

\medskip

The first attempt to drop $\Delta \rho$ was from \cite{C18}.
To be more precise, the author of \cite{C18} considered
\begin{equation}\label{dH-KS}
\left\{
\begin{aligned}
	&\partial_t \rho =\nabla \cdot (\rho q) + \rho(1-\rho), \\
        &\partial_t q = \nabla \rho,
\end{aligned}
\right.
\end{equation}
where a term $\rho(1-\rho)$ represents a logistic growth restriction. In other words, the author considered the case with damping rather than diffusion of $\rho$. In this system, the author of \cite{C18} obtained the existence of global smooth solutions in $\mathbb{R}^2$ when $\norm{(\rho_0-1,q_0)}_{(H^3\times H^3)(\mathbb{R}^2)}\ll 1$ and $\nabla \times q_0 =0$. Recently, in \cite{IJ21} (see also \cite{BG22}), the authors proved the finite-time singularity formation of \eqref{H-KS} in $\mathbb{R}^d$ in the case when initial data satisfies some vanishing condition. To be precise, they showed that for initial data $(\rho_0,c_0)$ satisfying
$\rho_0(x_0)=0$ and $\nabla \rho_{0}(x_0)= \nabla c_{0}(x_0)=\bold{0}$ for some $x_{0} \in \mathbb{R}^d$, there exists a time $T^*>0$ such that
\begin{equation}\label{blowup of injee}
    \lim_{t\rightarrow T^*} \left( \norm{\rho(\cdot,t)}_{W^{2,\infty}(\mathbb{R}^d)} + \norm{c(\cdot,t)}_{W^{2,\infty}(\mathbb{R}^d)} \right) =\infty.
\end{equation}

\subsection{Main results and Discussion}\label{subsection main results}
Our primary motivation for studying the hyperbolic version \eqref{H-KS} is that finite-time singularity formation of \eqref{H-KS} could lead to rapid norm growth of solutions to \eqref{P-KS} when the diffusion coefficient $\kappa$ is small. In fact, in \cite{LSN00} where \eqref{P-KS} was initially derived, very small $\kappa$ was used, and numerical simulations showed large norm growth.
This kind of result can be found in other hyperbolic-elliptic type Keller-Segel equations (see \cite{Winkler14, KKS16}). 

The previous works covered in the last subsection raise three interesting questions:
\begin{itemize}
    \item[\textbf{Q1.}] Can we delete the vanishing condition of initial data in \cite{IJ21}? In other words, is there any nonvanishing initial data leading to finite time singularity formation?
    \item[\textbf{Q2.}] If some nonvanishing initial data blows up at some finite time, then is it because $c$ touches zero at some point, which makes logarithmic sensitivity $\log c$ blow up? Furthermore, can we 
    get more detailed behaviors of solutions than \eqref{blowup of injee} at the blow-up time?
    \item[\textbf{Q3.}] Compared with global regularity results near constant equilibrium states for \eqref{P-KS} and \eqref{dH-KS}, what happens to solutions of \eqref{H-KS} near constant equilibrium states? 
\end{itemize}
To the best of the author's knowledge, there are no known answers to these questions.
The aim of this paper is to answer them. 
To begin with, our physical motivation for \textbf{Q1} is to consider non-vacuum states. 
Moreover, considering nonvanishing data, we can have an opportunity to deal with \textbf{Q3}. 
In order to explain how difficult it is to answer \textbf{Q1},
we need to compare \textbf{Q1} with vanishing conditions in \cite{IJ21}: $\rho_0(x_0)=0$, $\nabla \rho_{0}(x_0)= \nabla c_{0}(x_0)=\bold{0}$ for some $x_{0} \in \mathbb{R}^d$.
The authors of \cite{IJ21} employed propagation of these conditions to prove the finite-time blow-up. Heuristically, if initial cells don't exist at $x_0$ but initial chemicals are abundant at $x_0$, cells would be concentrated fast in the neighborhood of the point to consume the chemicals. But due to the propagation of the vanishing conditions, cell density remains zero at the point at which solutions blow up within some finite time. But this scenario does not work in nonvanishing initial data, so we need to consider other scenarios to deal with it.

To do so, in this note, we shall assume that initial data $(\rho_0,c_0)$ satisfies
\begin{equation}\label{asm-1}
    \left\{
    \begin{aligned}
        &(\rho_0, c_0) \in (L^{\infty}\times L^{\infty})(\mathbb{R}^d), \\
        &\rho_0(x) \ge \beta_1 >0, \quad c_0(x) \ge \beta_2 > 0 \quad \text{for some constants} \;\, \beta_1,\,\beta_2.
    \end{aligned}
    \right.
\end{equation}
Under this assumption, the dynamics of \eqref{H-KS} guarantees $\rho, c >0$, and $\normif{c(t)}\le \normif{c_0}<\infty$.

In order to investigate whether solutions to \eqref{H-KS} blow up at a finite time, we firstly have to establish local well-posedness of \eqref{H-KS} for nonvanishing initial data in $\mathbb{R}^d$ with $d\ge1$.
\begin{theorem}[Local well-posedness for nonvanishing data]\label{thm: lwp}
Let $d\ge1$. Assume that the initial data $(\rho_0,c_0)$ satisfies \eqref{asm-1}, and
\begin{align}\label{asm-lwp-1}
    \left(\nabla\rho_0,\nabla c_0\right)\in (H^{m-1} \times H^{m})(\mathbb{R}^d)
    \quad \text{for some}\;\,m>\frac{d}{2}+1.
\end{align}
Then there exist a time $T>0$, time-dependent positive constants $\beta_1(t)$, $\beta_2(t)$, and a unique solution $(\rho,c)$ to \eqref{H-KS} such that
\begin{equation}\label{asm-lwp-2}
    \left\{
    \begin{aligned}
        &\rho(x,t) \ge \beta_1(t), \,\; \quad c(x,t) \ge \beta_2(t) \quad \text{on} \;\,[0,T], \\
    &(\rho, c) \in L^{\infty}\left([0,T]; (L^{\infty} \times L^{\infty})(\mathbb{R}^d)\right),  \\
    &\left(\nabla \rho, \nabla c\right) \in L^{\infty}\left([0,T];(H^{m-1} \times H^{m})(\mathbb{R}^d)\right).
    \end{aligned}
    \right.
\end{equation}
Furthermore, if initial data satisfies $(\nabla\rho_0,\nabla c_0)\in (H^{\infty} \times H^{\infty})(\mathbb{R}^d)$, then the unique solution satisfies $\left(\nabla \rho, \nabla c\right) \in L^{\infty}\left([0,T];(H^{\infty} \times H^{\infty})(\mathbb{R}^d)\right)$, where
$H^{\infty}(\mathbb{R}^d):=\bigcap_{k\ge0}H^{k}(\mathbb{R}^d)$.
\end{theorem}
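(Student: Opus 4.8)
The plan is to remove the logarithmic sensitivity by the Cole--Hopf-type substitution $q:=-\nabla\log c=-\nabla c/c$, under which \eqref{H-KS} turns into the first-order quasilinear system
\[
  \partial_t\rho=\chi\nabla\cdot(\rho q)=\chi\, q\cdot\nabla\rho+\chi\rho\,\nabla\cdot q,\qquad \partial_t q=\mu\nabla\rho ,
\]
while $c$ is recovered from the pointwise-in-$x$ ODE $\partial_t c=-\mu c\rho$, i.e. $c(x,t)=c_0(x)\exp\!\big(-\mu\int_0^t\rho(x,s)\,ds\big)$. This formula already gives $0<c\le\|c_0\|_{L^\infty}$ and, once $\|\rho\|_{L^\infty}$ is controlled, a positive lower bound $\beta_2(t)=\beta_2\exp\!\big(-\mu\int_0^t\|\rho(s)\|_{L^\infty}\,ds\big)$; likewise, writing the first equation as the transport equation $\partial_t\rho+(-\chi q)\cdot\nabla\rho=\chi(\nabla\cdot q)\rho$ yields $\beta_1\,e^{-\chi\int_0^t\|\nabla\cdot q\|_{L^\infty}}\le\rho(x,t)\le\|\rho_0\|_{L^\infty}\,e^{\chi\int_0^t\|\nabla\cdot q\|_{L^\infty}}$. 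So it suffices to solve the $(\rho,q)$-system in the class $\rho\in L^\infty$, $\nabla\rho\in H^{m-1}$, $q\in H^m$, and then translate back: \eqref{asm-1}--\eqref{asm-lwp-1} give $q_0=-\nabla c_0/c_0\in H^m$ by a composition/Moser estimate (using $c_0\ge\beta_2>0$), and conversely $\nabla c=-cq\in H^m$ once $q\in H^m$ and $c$ is bounded above and below.

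The heart of the argument is the a priori energy estimate for the $(\rho,q)$-system, which I would run through the Friedrichs symmetrizer $\mathrm{diag}(\mu,\chi\rho\, I_d)$: this is positive definite as long as $\rho$ is bounded below, and it is exactly what matches the $(H^{m-1},H^m)$-balance between $\nabla\rho$ and $q$. Concretely, set
\[
  E_m(t):=\mu\sum_{1\le|\alpha|\le m}\|\partial^\alpha\rho\|_{L^2}^2+\chi\sum_{|\alpha|\le m}\int_{\mathbb{R}^d}\rho\,|\partial^\alpha q|^2\,dx ,
\]
which by the bounds on $\rho$ is equivalent to $\|\nabla\rho\|_{H^{m-1}}^2+\|q\|_{H^m}^2$. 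Differentiating in $t$ and applying Leibniz, after one integration by parts the transport term $\chi q\cdot\nabla\partial^\alpha\rho$ produces only a quadratic term bounded by $\|\nabla\cdot q\|_{L^\infty}\|\partial^\alpha\rho\|_{L^2}^2$, and the only remaining top-order ($(m{+}1)$-derivative) contributions are $\langle\partial^\alpha\rho,\rho\,\partial^\alpha\nabla\cdot q\rangle$ from the $\rho$-equation and $\langle\rho\,\partial^\alpha q,\partial^\alpha\nabla\rho\rangle$ from the $q$-equation; one integration by parts brings these to the same form and they cancel, precisely because of the weight $\rho$ in the symmetrizer, leaving only terms of order $\le m$. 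Everything else --- the commutators $[\partial^\alpha,\rho]\,\nabla\cdot q$, the lower Leibniz terms, and $\int(\partial_t\rho)|\partial^\alpha q|^2$ --- is bounded by $C\big(\|\rho\|_{L^\infty},\beta_1(t)^{-1},\|\nabla\rho\|_{L^\infty},\|q\|_{W^{1,\infty}}\big)\,E_m$ via the Moser and Kato--Ponce product/commutator estimates, which is where the hypothesis $m>\tfrac{d}{2}+1$ (so $H^{m-1}\hookrightarrow L^\infty$ and $H^m\hookrightarrow W^{1,\infty}$) enters. Combining this with the transport bounds on $\rho$ and a Gronwall/continuity argument produces a time $T>0$ depending only on $\|\nabla\rho_0\|_{H^{m-1}}$, $\|\nabla c_0\|_{H^m}$, $\beta_1$, $\beta_2$, $\|\rho_0\|_{L^\infty}$, $\|c_0\|_{L^\infty}$ on which $E_m$, $\|\rho\|_{L^\infty}$ and the lower bounds $\beta_1(t),\beta_2(t)>0$ all stay under control.

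To turn these a priori bounds into an actual solution I would use a standard scheme --- Friedrichs mollification of the data, or the successive approximation $\partial_t\rho^{n+1}=\chi\nabla\cdot(\rho^{n+1}q^n)$, $\partial_t q^{n+1}=\mu\nabla\rho^{n+1}$ (each step being a linear transport equation for $\rho^{n+1}$ followed by a time integration for $q^{n+1}$) --- derive the same estimates uniformly in the parameter on a common interval $[0,T]$, and pass to the limit; uniqueness and continuous dependence follow from the analogous $L^2$-level estimate for the difference of two solutions, which again loses no derivative because of the same symmetric structure (with weight $\rho$). Undoing the Cole--Hopf change of variables gives $(\rho,c)$ with the stated regularity and positivity. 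Finally, for the $H^\infty$-persistence: the existence time $T$ above depends only on the $m$-th order data, while for every $m'>m$ the energy inequality for $E_{m'}$ is linear in $E_{m'}$ with coefficient controlled by $\|\nabla\rho\|_{W^{1,\infty}}+\|q\|_{W^{1,\infty}}$ --- quantities already bounded on $[0,T]$ by the $H^m$-theory --- so Gronwall keeps $E_{m'}$ finite on all of $[0,T]$; intersecting over $m'\ge m$ gives $(\nabla\rho,\nabla c)\in L^\infty([0,T];H^\infty\times H^\infty)$.

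I expect the main obstacle to be carrying out the energy estimate in the correct, slightly asymmetric function space: one has to verify that the symmetrizer-induced cancellation genuinely absorbs the would-be derivative loss, so that $\nabla c$ can be propagated one order above $\nabla\rho$, and at the same time keep the lower bounds on $\rho$ and especially on $c$ alive throughout --- it is exactly the strict positivity of $c$ that keeps $\log c$, hence the whole system, non-degenerate. The remaining ingredients (the Moser and commutator estimates, the iteration and its uniform bounds, uniqueness, and the bootstrap to $H^\infty$) are routine.
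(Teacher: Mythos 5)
Your proposal follows the paper's proof in all essential respects: the Cole--Hopf substitution $q=-\nabla c/c$ to eliminate the logarithm, the $\sqrt{\rho}$-weighted energy on $q$ (your $\chi\sum_\alpha\int\rho|\partial^\alpha q|^2$ is exactly the paper's $\sum_k\|\sqrt{\rho}\nabla^k q\|_{L^2}^2$ after the paper's rescaling removes $\chi,\mu$), the crucial cancellation at order $m+1$ between $\int\rho\,\nabla^{m+1}\!\cdot q\cdot\nabla^m\rho$ and the cross term coming from $\partial_t q=\nabla\rho$, the transport/ODE control of $\|\rho^{\pm1}\|_{L^\infty}$ and $\|c^{\pm1}\|_{L^\infty}$, the Gr\"onwall closure, the weighted $L^2$-difference estimate for uniqueness, and the inductive bootstrap for $H^\infty$ persistence --- all identical in substance to the paper's argument.

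The one place you deviate is the construction step, and one of your two suggestions does not work as written. The iteration $\partial_t\rho^{n+1}=\chi\nabla\cdot(\rho^{n+1}q^n)$, $\partial_t q^{n+1}=\mu\nabla\rho^{n+1}$ decouples the system: the symmetrizer cancellation pairs $\int\rho\,\partial^\alpha\nabla\cdot q\,\partial^\alpha\rho$ from the $\rho$-equation against its partner from the $q$-equation, and this requires the \emph{same} $q$ in both; with $q^n\ne q^{n+1}$ the pairing is broken, so the top-order estimate on $\rho^{n+1}$ needs $\nabla\cdot q^n\in H^m$, i.e. $q^n\in H^{m+1}$, and the scheme loses a derivative at each step. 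A fully coupled linearization (both $\rho^{n+1}$ and $q^{n+1}$ solving a linear symmetrizable system with coefficients from step $n$) would avoid this, as would your alternative suggestion of mollified data. The paper instead uses the viscous approximation $\partial_t\rho^{(\epsilon)}=\epsilon\Delta\rho^{(\epsilon)}+\nabla\cdot(\rho^{(\epsilon)}q^{(\epsilon)})$, $\partial_t q^{(\epsilon)}=\nabla\rho^{(\epsilon)}$, deriving $\epsilon$-uniform versions of the same a priori estimates and letting $\epsilon\to0$, citing \cite{IJ21} for details. Aside from this one defective alternative, the proposal is correct.
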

In order to show this theorem, we  compare $\nabla^m \rho$ with $\sqrt{\rho} \nabla^{m+1}\log c$ (rather than $\nabla^{m+1} \log c$) and observe a cancellation of certain quantity which involves highest derivatives. This idea was motivated by \cite{IJ21}. We shall prove Theorem \ref{thm: lwp} in Section \ref{pf: lwp}.

\medskip

Next, in Section \ref{pf: fb-1}, we present some sufficient conditions of nonvanishing initial data for finite-time singularity formation when $d=1$.
\begin{theorem}[Sufficient conditions of data for finite-time blow-up in $\mathbb{R}$]\label{thm: blow-1}
Suppose that initial data $\left (\rho_{0}, c_{0}\right)$ satisfies \eqref{asm-1} with $d=1$,
\begin{subequations}
    \begin{align}
        &\bullet\; (\partial_x\rho_0,\partial_x c_0) \in  (H^{\infty} \times H^{\infty})(\mathbb{R}), \label{asm-2}\\
    &\bullet\; \text{$\rho_{0}$, $c_{0}$, and $\partial_{x}c_0$ attain their maximums and minimums at some points in $\mathbb{R}$} \label{asm-3},\\
    &\bullet\; \text{$\partial_{x}\rho_{0}(x_{0})\le0$ and  $c_0(x_0)\partial_{xx}c_{0}(x_{0})<(\partial_{x}c_{0}(x_{0}))^2$ at some $x_{0} \in \mathbb{R}$.} \label{asm-4}
    \end{align}
\end{subequations}
Then there exists some finite time $T^*>0$ such that the unique solution $(\rho, c)$ to \eqref{H-KS} satisfies
\begin{equation}\label{fb-1-but bounded}
    \sup_{t\in[0,T^*)}\left({\left\Vert \rho(\cdot,t) \right\Vert}_{L^{\infty}(\mathbb{R})}  +{\left\Vert c(\cdot,t)  \right\Vert}_{W^{1,\infty}(\mathbb{R})} +  {\left\Vert \partial_x \log c(\cdot,t)  \right\Vert}_{L^{\infty}(\mathbb{R})}\right)< \infty
\end{equation}
and
\begin{equation}\label{fb-1-real bounded}
    \lim_{t\to T^{*}} \left({\left\Vert \partial_x\rho(\cdot,t)  \right\Vert}_{L^{\infty}(\mathbb{R})}  +{\left\Vert \partial_{xx} c(\cdot,t)  \right\Vert}_{L^{\infty}(\mathbb{R})}\right) = \infty.
\end{equation}
\end{theorem}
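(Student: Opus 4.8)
The plan is to pass to the Cole--Hopf variable $\omega:=\partial_x\log c$ from \eqref{C-H trans}. Since $\partial_t\log c=-\mu\rho$, the one-dimensional system \eqref{H-KS} becomes the $2\times2$ system of conservation laws $\partial_t\rho+\chi\,\partial_x(\rho\omega)=0$, $\partial_t\omega+\mu\,\partial_x\rho=0$, whose flux Jacobian has eigenvalues $\lambda_{\pm}=\tfrac12\bigl(\chi\omega\pm\sqrt{\chi^{2}\omega^{2}+4\chi\mu\rho}\,\bigr)$, satisfying $\lambda_-<0<\lambda_+$ on the region $\{\rho>0\}$ in which Theorem~\ref{thm: lwp} keeps the solution; there the system is strictly hyperbolic. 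By Theorem~\ref{thm: lwp} and its continuation criterion there is a maximal existence time $T^{*}\in(0,\infty]$, and it suffices to show that $T^{*}<\infty$ and that as $t\uparrow T^{*}$ only $\partial_x\rho$ and $\partial_{xx}c$ blow up.

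\emph{A priori bounds (giving \eqref{fb-1-but bounded}).} From $\partial_t c=-\mu c\rho\le0$ and $c>0$ one gets $0<c(\cdot,t)\le c_0$ pointwise, so $\|c(t)\|_{L^{\infty}}\le\|c_0\|_{L^{\infty}}$. To bound $\rho$ and $\omega=\partial_x\log c$ I would use that the Riemann invariants $w^{\pm}=w^{\pm}(\rho,\omega)$ of this strictly hyperbolic $2\times2$ system --- smooth and single-valued on $\{\rho>0\}$ --- satisfy the transport equations $\partial_t w^{\pm}+\lambda_{\pm}\,\partial_x w^{\pm}=0$ while the solution is $C^{1}$, hence obey the maximum principle $\inf_x w^{\pm}_0\le w^{\pm}(\cdot,t)\le\sup_x w^{\pm}_0$; together with a check that the corresponding $(\rho,\omega)$-values form a fixed compact subset of $\{\rho>0\}$ --- the place where \eqref{asm-1} and the attainment hypothesis \eqref{asm-3} enter, the latter to make the maximum-principle bookkeeping legitimate on the whole line --- this traps $(\rho,\omega)$ in a fixed compact subset of $\{\rho>0\}$, uniformly in $x$ and $t\in[0,T^{*})$. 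Then $\|\rho(t)\|_{L^{\infty}}$ and $\|\partial_x\log c(t)\|_{L^{\infty}}=\|\omega(t)\|_{L^{\infty}}$ are bounded, $\int_0^t\rho$ stays bounded so $c$ stays bounded below by a positive constant, and $\|\partial_x c(t)\|_{L^{\infty}}=\|\omega c(t)\|_{L^{\infty}}$ is bounded; this is \eqref{fb-1-but bounded}.

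\emph{Genuine nonlinearity, Riccati blow-up, and conclusion (giving \eqref{fb-1-real bounded}).} A direct computation gives, with right eigenvector $r_+=(\lambda_+,\mu)$, the value $\nabla_{(\rho,\omega)}\lambda_+\cdot r_+=2\chi\mu\lambda_+/\sqrt{\chi^{2}\omega^{2}+4\chi\mu\rho}\neq0$ on $\{\rho>0\}$ (and likewise for $\lambda_-$), so both characteristic fields are genuinely nonlinear; orient $w^+$ so that $\partial_{w^+}\lambda_+>0$, which forces $\partial_\rho w^+>0$ and $\partial_\omega w^+>0$. Along a $\lambda_+$-characteristic $\dot X=\lambda_+(X,t)$ one has the key cancellation
\[
\frac{d}{dt}\bigl(\partial_x\rho\bigr)+\frac{\chi\rho}{\lambda_+}\,\frac{d}{dt}\bigl(\partial_x\omega\bigr)=-2\chi\,(\partial_x\rho)(\partial_x\omega),
\]
in which both second-order terms $\partial_{xx}\rho$ and $\partial_{xx}\omega$ drop out (this is the hyperbolic counterpart of the cancellation behind Theorem~\ref{thm: lwp}); an integrating-factor reweighting then turns this into a Riccati equation $\dot y=-\theta(t)\,y^{2}$ for a weighted version $y$ of $\partial_x w^+$, with $0<\theta_1\le\theta(t)\le\theta_2$, the two-sided bound being exactly what the a priori bounds supply. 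Now \eqref{asm-4} says $\partial_x\rho_0(x_0)\le0$ and $c_0\partial_{xx}c_0(x_0)<(\partial_x c_0(x_0))^{2}$, i.e.\ $\partial_x\omega_0(x_0)=\partial_{xx}\log c_0(x_0)<0$; with $\partial_\rho w^+,\partial_\omega w^+>0$ this forces $\partial_x w^+_0(x_0)<0$, hence $y(0)<0$ at the foot of the $\lambda_+$-characteristic through $x_0$, so $1/y$ increases at rate $\ge\theta_1$ and hits $0$ at a finite time $\bar T\le(\theta_1|y(0)|)^{-1}$, i.e.\ $\partial_x w^+\to-\infty$ along that characteristic as $t\uparrow\bar T$. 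Since $(\rho,\omega)$ stays compact, at least one of $\partial_x\rho,\ \partial_x\omega=\partial_{xx}\log c$ is then unbounded, and via $\partial_{xx}c=c\,\partial_{xx}\log c+(\partial_x c)^{2}/c$ together with the bounds on $c,\partial_x c$ this gives $\|\partial_x\rho(t)\|_{L^{\infty}}+\|\partial_{xx}c(t)\|_{L^{\infty}}\to\infty$; the continuation criterion yields $T^{*}=\bar T<\infty$, and monotonicity of the blow-up along the fixed characteristic upgrades this to the genuine limit \eqref{fb-1-real bounded}.

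\emph{Main difficulty.} The delicate point is the a priori bound on the unbounded line: one must preclude the compression produced above from also making $\|\rho(t)\|_{L^{\infty}}$ blow up or from driving $c$ to $0$ (which would make $\log c$, rather than a second derivative, singular, contrary to \eqref{fb-1-but bounded}). Thus the two parts are intertwined --- the lower bound $\theta(t)\ge\theta_1>0$ needed to close the Riccati is precisely the uniform control obtained from the a priori bounds --- and it is here that the conservation-law structure and hypothesis \eqref{asm-3} are really used; the remaining sign-bookkeeping and the equivalence between blow-up of $\partial_{xx}\log c$ and of $\partial_{xx}c$ are then routine.
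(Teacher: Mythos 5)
Your proposal is correct and follows essentially the same route as the paper: Cole--Hopf reduction to a $2\times2$ strictly hyperbolic system, Riemann invariants transported along characteristics so their ranges stay in a fixed compact subset of $\{\rho>0\}$ (giving \eqref{fb-1-but bounded}), and a Riccati equation along a $\lambda_+$-characteristic closed by the uniform genuine-nonlinearity bound and the sign condition from \eqref{asm-4}. The paper differentiates the Riemann invariant $P$ directly to obtain the Riccati identity and constructs the invariant map explicitly as a global diffeomorphism (via a Poincar\'e-lemma argument and a mean-value injectivity check), whereas you phrase the cancellation as a reweighted linear combination of $\partial_x\rho$ and $\partial_x\omega$ and cite the standard existence of invariants, but these are organizational rather than substantive differences.
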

\begin{remark}\label{rmk. bump function}
    There is a large class of functions satisfying \eqref{asm-1} and \eqref{asm-2}-\eqref{asm-4}.
    For instance, let $\psi \in C^{\infty}_c(\mathbb{R})$ be a smooth bump function such that
    \begin{align*}
 \psi(x)=
     \begin{cases}
      1 \quad (|x| \le 1), \\
      0 \quad (|x|\ge 2).
     \end{cases}
 \end{align*}
 Then we can find $x_0 \in \mathbb{R}$ satisfying $\partial_x\psi(x_0)\le 0$ and $\partial_{xx}\psi(x_0) < 0$. Hence for any constants $\Bar{\rho},\,\Bar{c}>0$, if we define
    \begin{equation}\label{1D blowup data given in rmk1.1}
        \rho_0 := \Bar{\rho} +\psi \quad \text{and} \quad c_0 := \Bar{c} + \psi,
    \end{equation}
    then we can check that $(\rho_0,c_0)$ satisfies \eqref{asm-1} and \eqref{asm-2}-\eqref{asm-4}.
\end{remark}
Theorem \ref{thm: blow-1} answers our \textbf{Q1} and \textbf{Q2} in one dimension. Regarding \textbf{Q1}, nonvanishing initial data given in \eqref{1D blowup data given in rmk1.1} leads to the finite-time blow-up. Furthermore, for \textbf{Q2}, noticing $\sup_{t\in[0,T^*)}{\left\Vert \partial_x \log c(\cdot,t)  \right\Vert}_{L^{\infty}(\mathbb{R})}< \infty$ as we can see in \eqref{fb-1-but bounded}, we conclude that our singularity formation is not because $c$ touches zero but derivatives of $\rho$ and $c$ blow up as \eqref{fb-1-real bounded}. 
To the best of the author's knowledge, it is the first time to show that the finite-time blow-up of systems with logarithimic sensitivity is not attributed to the singularity from $c \rightarrow 0$. 

The main difficulty of showing Theorem \ref{thm: blow-1} lies in finding proper variables to control the equation for $\rho$ containing many derivatives. A key tool to solve this problem is a diffeomorphic transformation $w=(w_1,w_2):\Omega\rightarrow w(\Omega)$ with $ \Omega:=\left\{(z_{1},z_{2})\subset \mathbb{R}^2 : z_{1}>0 \right\}$. This $w$ is called Riemann invariants (refer to \cite{Lax} or \cite{Eva} for instance). In addition to the general definition of Riemann invariants, we  consider $w$ satisfying certain extra properties. 
Combining these extra properties with our assumptions \eqref{asm-1}, \eqref{asm-2}-\eqref{asm-4},
we deduce the stated finite-time singularity formation by using newly defined variables: $w_1(\rho,\log c)$, $ w_2(\rho,\log c)$.

\medskip

With the above one-dimensional result at hand, we finally prove the existence of a set of nonvanishing smooth initial data in $\mathbb{R}^d$ with $d\ge1$ making the solution singular in finite time. Our result is divided into $\mathbb{R}$ and $\mathbb{R}^d$ $(d\ge2)$ cases, respectively:
\begin{theorem}[Finite-time blow-up in $\mathbb{R}^d$]\label{thm: blow-d}
$\quad$

$\bullet\;$ Let $d=1$. Then for any interval $I\subset \mathbb{R}$, there exist some finite time $T^*>0$ and initial data $\left (\rho_{0}, c_{0}\right)$ satisfying \eqref{asm-1} and $(\nabla\rho_0,\nabla c_0)\in (H^{\infty} \times H^{\infty})(\mathbb{R})$ such that the unique solution $(\rho, c)$ to \eqref{H-KS} satisfies \eqref{fb-1-but bounded} and
\begin{equation*}
    \lim_{t\to T^{*}} \left({\left\Vert \partial_x\rho(\cdot,t)  \right\Vert}_{L^{\infty}(I)}  +{\left\Vert \partial_{xx} c(\cdot,t)  \right\Vert}_{L^{\infty}(I)}\right) = \infty.
\end{equation*}

$\bullet\;$ Let $d\ge2$. Then there exist some finite time $T^*>0$ and initial data $\left (\rho_{0}, c_{0}\right)$ satisfying \eqref{asm-1} and $(\nabla\rho_0,\nabla c_0)\in (H^{\infty} \times H^{\infty})(\mathbb{R}^d)$ such that the unique solution $(\rho, c)$ to \eqref{H-KS} satisfies
\begin{equation*}\label{result rd}
    \lim_{t\to T^{*}} \left({\left\Vert \rho(\cdot,t)  \right\Vert}_{W^{1,\infty}(\mathbb{R}^d)}  +{\left\Vert c(\cdot, t)  \right\Vert}_{W^{2,\infty}(\mathbb{R}^d)}\right) = \infty.
\end{equation*}
\end{theorem}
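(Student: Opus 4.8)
The plan is to derive both parts of the theorem from Theorem~\ref{thm: blow-1}, applied to the bump data of Remark~\ref{rmk. bump function}, by exploiting two structural features of~\eqref{H-KS}: a scaling symmetry and finite speed of propagation. The scaling is $(\rho,c)\mapsto(\rho_\lambda,c_\lambda)$ with $\rho_\lambda(x,t):=\lambda^{-2}\rho(\lambda^{-1}x,\lambda^{-2}t)$ and $c_\lambda(x,t):=c(\lambda^{-1}x,\lambda^{-2}t)$; a direct computation shows it maps solutions of~\eqref{H-KS} to solutions, sends a lifetime $T$ to $\lambda^2T$, multiplies the $L^\infty$ bounds on $(\rho,c)$ by $\lambda^{-2}$ and the one on $\nabla\log c$ by $\lambda^{-1}$, and preserves~\eqref{asm-1} and~\eqref{asm-2}--\eqref{asm-4} when applied to~\eqref{1D blowup data given in rmk1.1}. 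Finite speed of propagation comes from writing~\eqref{H-KS}, with $q:=\nabla\log c$, as the hyperbolic system $\partial_t\rho=-\chi\nabla\cdot(\rho q)$, $\partial_t q=-\mu\nabla\rho$, whose characteristic speeds at a point are bounded by a continuous function of $\bigl(\norm{\rho}_{L^\infty},\norm{q}_{L^\infty}\bigr)$ --- in one dimension the speeds are $\tfrac12\bigl(\chi q\pm\sqrt{\chi^2q^2+4\chi\mu\rho}\bigr)$. Consequently, on any interval on which the solution exists with these norms bounded by $M$, initial data shared on a set $U$ force the two solutions to coincide on $\{\,x:\operatorname{dist}(x,U^{c})>Mt\,\}$.

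For the case $d=1$, fix an interval $I$ and an interior point $y_I\in I$. Let $(\rho,c)$ solve~\eqref{H-KS} with the data~\eqref{1D blowup data given in rmk1.1}; by Theorem~\ref{thm: blow-1} it blows up at some finite $T_1$, and since this data equals the (spatially constant) background solution in the $(\rho,q)$ variables outside $\operatorname{supp}\psi$, finite speed of propagation confines that blow-up to $\operatorname{supp}\psi+[-MT_1,MT_1]$. Now take as new initial data $\lambda^{-2}\rho_0(\lambda^{-1}(\cdot-y_I))$ and $c_0(\lambda^{-1}(\cdot-y_I))$, i.e.\ the $\lambda$-scaling of~\eqref{1D blowup data given in rmk1.1} translated to $y_I$; by the scaling and by translation invariance the corresponding solution blows up at $\lambda^2T_1$, with blow-up confined to $y_I+\lambda\bigl(\operatorname{supp}\psi+[-MT_1,MT_1]\bigr)$, a set of length $O(\lambda)$ centred at $y_I$, hence contained in $I$ for $\lambda$ small. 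There the $L^\infty(\mathbb{R})$ and $L^\infty(I)$ norms of $\partial_x\rho$ and $\partial_{xx}c$ coincide, and since~\eqref{fb-1-but bounded}--\eqref{fb-1-real bounded} transfer under the scaling (the $\partial_x\log c$ bound only being multiplied by $\lambda^{-1}$), taking $T^*:=\lambda^2T_1$ proves the first bullet.

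For the case $d\ge2$, I would lift a $\lambda$-rescaled one-dimensional bump after cutting it off transversally. Writing $x=(x_1,x')\in\mathbb{R}\times\mathbb{R}^{d-1}$, fix $\phi\in C_c^\infty(\mathbb{R}^{d-1})$ with $0\le\phi\le1$ and $\phi\equiv1$ on $\{|x'|\le2\}$, and set $\rho_0(x):=\lambda^{-2}\bar\rho+\lambda^{-2}\psi(\lambda^{-1}x_1)\phi(x')$ and $c_0(x):=\bar c+\psi(\lambda^{-1}x_1)\phi(x')$; the transverse cutoff is precisely what puts $(\nabla\rho_0,\nabla c_0)$ in $(H^\infty\times H^\infty)(\mathbb{R}^d)$, while~\eqref{asm-1} is immediate. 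Let $(\hat\rho,\hat c)$ be the one-dimensional solution with the uncut data $\bigl(\lambda^{-2}\bar\rho+\lambda^{-2}\psi(\lambda^{-1}\cdot),\ \bar c+\psi(\lambda^{-1}\cdot)\bigr)$, i.e.\ the $\lambda$-scaling of~\eqref{1D blowup data given in rmk1.1}; by Theorem~\ref{thm: blow-1} it exists on $[0,\lambda^2T_1)$ with $\norm{\hat\rho}_{L^\infty}\lesssim\lambda^{-2}$ and $\norm{\partial_{x_1}\log\hat c}_{L^\infty}\lesssim\lambda^{-1}$ there, and blows up at $\lambda^2T_1$. Its $x'$-independent lift $(\tilde\rho,\tilde c)(x,t):=(\hat\rho(x_1,t),\hat c(x_1,t))$ solves~\eqref{H-KS} on $\mathbb{R}^d$ and shares initial data with $(\rho_0,c_0)$ on $\{|x'|\le2\}$, so finite speed of propagation with speed $M_\lambda\lesssim\lambda^{-1}$ gives $(\rho,c)\equiv(\tilde\rho,\tilde c)$ on $\{|x'|\le 2-M_\lambda t\}$ for $t\le2/M_\lambda$. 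Since $\lambda^2T_1\ll 2/M_\lambda\sim\lambda$ for $\lambda$ small, the slice $\{x'=0\}$ stays in this coincidence region throughout $[0,\lambda^2T_1)$, and differentiating $\rho(x_1,0,t)=\hat\rho(x_1,t)$ and $c(x_1,0,t)=\hat c(x_1,t)$ in $x_1$ gives
\[
\norm{\rho(\cdot,t)}_{W^{1,\infty}(\mathbb{R}^d)}+\norm{c(\cdot,t)}_{W^{2,\infty}(\mathbb{R}^d)}\ \ge\ \norm{\partial_{x_1}\hat\rho(\cdot,t)}_{L^\infty(\mathbb{R})}+\norm{\partial_{x_1}^2\hat c(\cdot,t)}_{L^\infty(\mathbb{R})}\ \longrightarrow\ \infty
\]
as $t\to\lambda^2T_1$, by~\eqref{fb-1-real bounded}. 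Hence the maximal existence time $T^*$ of $(\rho,c)$ is $\le\lambda^2T_1<\infty$; in the case $T^*=\lambda^2T_1$ the displayed inequality already gives the conclusion, and in the case $T^*<\lambda^2T_1$ the blow-up alternative for~\eqref{H-KS} --- persistence as long as $\norm{\rho}_{W^{1,\infty}}+\norm{c}_{W^{2,\infty}}$ stays bounded, with $c$ remaining bounded below on $[0,T^*)$ since $\partial_t c=-\mu c\rho$ --- yields $\lim_{t\to T^*}\bigl(\norm{\rho(\cdot,t)}_{W^{1,\infty}(\mathbb{R}^d)}+\norm{c(\cdot,t)}_{W^{2,\infty}(\mathbb{R}^d)}\bigr)=\infty$.

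The main obstacle I anticipate is the finite-speed-of-propagation step: setting it up cleanly for~\eqref{H-KS} and, above all, obtaining a propagation-speed bound that is explicit in exactly the norms that Theorem~\ref{thm: blow-1} keeps bounded. Granting that, the remaining work is the quantitative accounting which makes the rescaled blow-up time $\lambda^2T_1$ outrun the transverse domain-of-dependence time $2/M_\lambda\sim\lambda$ as $\lambda\to0$, after which the reduction of the $d$-dimensional problem to Theorem~\ref{thm: blow-1} is routine; one also has to check, but this is standard, that the relevant breakdown is genuinely in the $W^{1,\infty}\times W^{2,\infty}$ norm of the statement rather than in $\nabla\log c$ or a higher Sobolev norm, which follows from the lower bound on $c$ and the continuation criterion.
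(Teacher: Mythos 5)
Your strategy coincides with the paper's at the top level---push the one-dimensional blow-up of Theorem~\ref{thm: blow-1} into $\mathbb{R}^d$ by lifting the $1$D solution and invoking a finite-speed-of-propagation lemma (the paper's Lemma~\ref{lemma: fps})---and the $d=1$ part is essentially the paper's Step~2: the scaling $\rho_a(x,t)=a^2\rho(ax,a^2t)$, $c_a(x,t)=c(ax,a^2t)$ (your $\lambda=1/a$), plus finite speed of propagation to localize where the blow-up happens. For $d\ge2$, however, your route genuinely differs from the paper's. The paper keeps the $1$D profile unrescaled and instead \emph{widens} the transverse cutoff, taking data $\rho_0(x)=\psi(x_1)\prod_{k\ge2}\psi(\delta x_k)+\bar\rho$ with $\delta\to0$ so that the data agree with the $x'$-independent lift on the ever-larger slab $\{|x_k|\le\delta^{-1}\}$, comfortably containing the propagation cone. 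You instead rescale the $1$D profile to shrink the blow-up time to $\lambda^2T_1$ while keeping the transverse cutoff $\phi(x')$ of fixed width, and try to outrun the transverse domain of dependence. That can work, but it is more delicate than the paper's widening, and the constraint you impose is exactly where the proposal is short.

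The concrete issue is the propagation speed you use. You assert $M_\lambda\lesssim\lambda^{-1}$ on the grounds that the characteristic speeds of~\eqref{H-KS'} are $O(\sqrt{\rho}+|q|)$. But the finite-speed result that the paper actually proves (Lemma~\ref{lemma: fps}) has speed $6Ad$ with $A=1+\sup\norm{\rho_2}_{L^\infty}+\sup\norm{\nabla\log c_1}_{L^\infty}$---\emph{linear} in $\norm{\rho}_{L^\infty}$, not its square root. Under the $\lambda$-scaling this is $M_\lambda\sim\lambda^{-2}$, not $\lambda^{-1}$, and then your key comparisons no longer close: in $d=1$ the propagation distance $M_\lambda\cdot\lambda^2T_1$ is $O(1)$ rather than $O(\lambda)$, and in $d\ge2$ the transverse domain-of-dependence time $2/M_\lambda\sim\lambda^2$ is only comparable to, not much larger than, the blow-up time $\lambda^2T_1$. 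The paper compensates for its cruder speed by taking $\bar\rho$ small as well as $a$ large in $d=1$, and by widening the transverse slab in $d\ge2$; you rely on the sharp speed instead. The sharp speed is in fact attainable---in the estimate of $\RN{2}+\RN{4}$ in Lemma~\ref{lemma: fps} one should apply Cauchy's inequality with parameter $\sqrt{\norm{\rho_2}_{L^\infty}}$, giving
$\int_{\partial B}\rho_2\bigl(\textstyle\sum_i(\tilde q)_i\bigr)\tilde\rho\le C\sqrt{\norm{\rho_2}_{L^\infty}}\int_{\partial B}\bigl(\tilde\rho^2+\rho_2|\tilde q|^2\bigr)$,
so the speed becomes $O\bigl(\norm{q_1}_{L^\infty}+\sqrt{\norm{\rho_2}_{L^\infty}}\bigr)$, which does scale like $\lambda^{-1}$---so the gap is fillable, but as written it is a gap, and you correctly flag it as the point at risk.

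One further point you should address, which also affects the paper's $d\ge2$ argument as written: the propagation speed $A$ involves a norm of the \emph{unknown} $d$-dimensional solution (either $\norm{\rho}_{L^\infty(\mathbb{R}^d)}$ or $\norm{\nabla\log c}_{L^\infty(\mathbb{R}^d)}$, depending on which role you give the two solutions in Lemma~\ref{lemma: fps}), which is bounded only by the contradiction hypothesis and whose size is not a priori controlled in $\lambda$. So the estimate $M_\lambda\lesssim\lambda^{-1}$ does not follow merely from the $1$D bounds of Theorem~\ref{thm: blow-1}. This needs a local-in-the-cone version of finite speed of propagation, or a bootstrap showing that, within the cone of coincidence, the relevant norm of the $d$-dimensional solution equals that of the controlled lift; otherwise the choice of $\lambda$ (or the paper's choice of $\delta$) is circular. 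Since you reproduce the paper's continuation criterion (Remark~\ref{rmk: reg.-1}, \ref{rmk: reg.-2}) correctly and the $d=1$ part avoids this issue because the ``other'' solution is the explicit constant state, the remaining work is precisely to make the $d\ge2$ finite-speed step both sharp enough and non-circular.
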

We need some remarks to interpret our results as well as to answer our questions \textbf{Q1} and \textbf{Q2}.
\begin{remark}\label{rmk: torus 1}
    Our result in $\mathbb{R}$ tells us that we can control the region where the finite-time blow-up occurs as we please. Hence we can obtain the same blow-up result in torus $\mathbb{T}:=\mathbb{R}/ \mathbb{Z}$ by making the singularity formed within the interval $(0,1)$ and extending the solution periodically.
\end{remark}
\begin{remark}\label{rmk: torus d}
    Concerning $\mathbb{R}^d$ with $d\ge2$, our result does not give as much information about blow-up  as the 1D case. This is because we cannot make use of Riemann invariants technique in the proof of  the multi-dimension case. However, noticing the previous remark, we can obtain the following result in torus $\mathbb{T}^d:=(\mathbb{R}/ \mathbb{Z})^d$ for any $d\ge 1$:

    \medskip
    
     There exist some finite time $T^*>0$ and initial data $(\rho_0,c_0)\in (C^{\infty} \times C^{\infty})(\mathbb{T}^d)$ such that the unique solution $(\rho, c)$ to \eqref{H-KS} satisfies
     \begin{equation*}
    \sup_{t\in[0,T^*)}\left({\left\Vert \rho(\cdot,t) \right\Vert}_{L^{\infty}(\mathbb{T}^d)}  +{\left\Vert c(\cdot,t)  \right\Vert}_{W^{1,\infty}(\mathbb{T}^d)} +  {\left\Vert \nabla \log c(\cdot,t)  \right\Vert}_{L^{\infty}(\mathbb{T}^d)}\right)< \infty
\end{equation*}
and
\begin{equation*}
    \lim_{t\to T^{*}} \left({\left\Vert \nabla \rho(\cdot,t)  \right\Vert}_{L^{\infty}(\mathbb{T}^d)}  +{\left\Vert \nabla^2 c(\cdot, t)  \right\Vert}_{L^{\infty}(\mathbb{T}^d)}\right) = \infty.
\end{equation*}

\medskip

Indeed, since Remark \ref{rmk: torus 1} guarantees a 1D solution $(\rho^1(x_1,t),c^1(x_1,t))$ defined on $\mathbb{T}$ which blows up at some finite time, just defining $(\rho(x,t),c(x,t)):=(\rho^1(x_1,t),c^1(x_1,t))$ for $x\in \mathbb{T}^d$, we can check that $(\rho(x,t),c(x,t))$ is a solution to \eqref{H-KS} which undergoes the same blow-up.
\end{remark}
The key of the proof of Theorem \ref{thm: blow-d} is 
\eqref{fb-1-but bounded} which is responsible for the finite propagation speed (Lemma \ref{lemma: fps}).
We prove Theorem \ref{thm: blow-d} in Section \ref{pf: fb-d}.

\medskip

Moreover, proceeding in the same manner as the proof of the previous theorem, we can construct initial data near any constant equilibrium state which blows up in finite time. 
\begin{corollary}[Finite-time blow-up near any constant equilibrium state in $\mathbb{R}^d$]\label{cor: blow-d}
Let $\bar{\rho}>0$ be any constant equilibrium state. Then for any $d\ge1$, $\epsilon>0$, and integer $m\ge0$, there exist some finite time $T^*>0$ and initial data $\left (\rho_{0}, c_{0}\right)$ satisfying \eqref{asm-1}, $(\nabla\rho_0,\nabla c_0)\in (H^{\infty} \times H^{\infty})(\mathbb{R}^d)$, and
\begin{equation}\label{asm: coro}
    \norm{\rho_0-\bar{\rho}}_{H^m(\mathbb{R}^d)}+\norm{\nabla\log c_0}_{H^m(\mathbb{R}^d)} \le \epsilon
\end{equation}
such that the unique solution $(\rho, c)$ to \eqref{H-KS} satisfies
\begin{equation*}
    \lim_{t\to T^{*}} \left({\left\Vert \rho(\cdot,t)  \right\Vert}_{W^{1,\infty}(\mathbb{R}^d)}  +{\left\Vert c(\cdot, t)  \right\Vert}_{W^{2,\infty}(\mathbb{R}^d)}\right) = \infty.
\end{equation*}
\end{corollary}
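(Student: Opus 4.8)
\medskip
\noindent\textit{Proof proposal.}
The plan is to follow the proof of Theorem \ref{thm: blow-d} almost verbatim, inserting a single small amplitude parameter $\delta>0$ into every bump profile so that the perturbative smallness \eqref{asm: coro} holds; the blow-up mechanism itself (one-dimensional singularity formation plus finite propagation speed) does not change. First I would treat $d=1$. Fix any constant $\bar{c}>0$, let $\psi$ be the bump function of Remark \ref{rmk. bump function}, and set $\rho_0:=\bar\rho+\delta\psi$, $c_0:=\bar{c}+\delta\psi$. For $\delta$ small these satisfy \eqref{asm-1} (with, say, $\beta_1=\bar\rho/2$, $\beta_2=\bar{c}/2$), trivially \eqref{asm-2}, and \eqref{asm-3} because $\psi$ and $\partial_x\psi$ are compactly supported and continuous; at the point $x_0$ with $\partial_x\psi(x_0)\le0$, $\partial_{xx}\psi(x_0)<0$ one has $\partial_x\rho_0(x_0)\le0$ and $c_0(x_0)\partial_{xx}c_0(x_0)-(\partial_x c_0(x_0))^2=\bar{c}\,\delta\,\partial_{xx}\psi(x_0)+O(\delta^2)<0$ for $\delta$ small, which is \eqref{asm-4}. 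Since $\norm{\rho_0-\bar\rho}_{H^m}=\delta\norm{\psi}_{H^m}$ and $\norm{\nabla\log c_0}_{H^m}=\norm{\delta\,\partial_x\psi/(\bar{c}+\delta\psi)}_{H^m}=O(\delta)$, shrinking $\delta$ makes \eqref{asm: coro} hold, and Theorem \ref{thm: blow-1} then gives exactly the asserted blow-up, since \eqref{fb-1-real bounded} already forces $\norm{\rho(\cdot,t)}_{W^{1,\infty}(\mathbb R)}+\norm{c(\cdot,t)}_{W^{2,\infty}(\mathbb R)}\to\infty$.

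For $d\ge2$ I would reproduce the argument behind the second bullet of Theorem \ref{thm: blow-d}. By the first bullet of that theorem, whose construction is again a rescaled, translated bump whose amplitude can be taken $\le\delta$ as above, I obtain one-dimensional data $(\rho_0^{1D},c_0^{1D})$ whose solution $(\rho^{1D},c^{1D})$ on $\mathbb R$ develops the singularity \eqref{fb-1-real bounded} with the divergence localized in a fixed bounded interval $I\subset\mathbb R$, while \eqref{fb-1-but bounded} keeps $\sup_{t<T^*}\bigl(\norm{\rho^{1D}(\cdot,t)}_{L^\infty}+\norm{\partial_{x_1}\log c^{1D}(\cdot,t)}_{L^\infty}\bigr)<\infty$. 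With a cutoff $\eta\in C_c^\infty(\mathbb R)$, $\eta\equiv1$ on $[-R,R]$, $0\le\eta\le1$, I would then set
\[
\rho_0(x):=\bar\rho+\bigl(\rho_0^{1D}(x_1)-\bar\rho\bigr)\textstyle\prod_{j=2}^d\eta(x_j),\qquad
c_0(x):=\bar{c}+\bigl(c_0^{1D}(x_1)-\bar{c}\bigr)\textstyle\prod_{j=2}^d\eta(x_j),
\]
which satisfy \eqref{asm-1}, have $(\nabla\rho_0,\nabla c_0)\in(H^\infty\times H^\infty)(\mathbb R^d)$ (compactly supported smooth gradients), satisfy \eqref{asm: coro} after a final shrinking of $\delta$, and coincide with $(\rho_0^{1D}(x_1),c_0^{1D}(x_1))$ on the slab $\{|x_j|\le R,\ 2\le j\le d\}$. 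Since $(x,t)\mapsto(\rho^{1D}(x_1,t),c^{1D}(x_1,t))$ solves \eqref{H-KS} pointwise on $\mathbb R^d\times[0,T^*)$, the finite propagation speed of Lemma \ref{lemma: fps} — whose hypothesis is precisely the boundedness recorded in \eqref{fb-1-but bounded} — forces the genuine $d$-dimensional solution from Theorem \ref{thm: lwp} to agree with it on a slab shrinking at a finite speed $M=M(\chi,\sup\norm{\rho}_{L^\infty},\sup\norm{\nabla\log c}_{L^\infty})$; choosing $R>MT^*$ leaves a nonempty set containing $I\times\{0\}^{d-1}$ on which the $d$-dimensional solution inherits the divergence \eqref{fb-1-real bounded}, hence $\norm{\rho(\cdot,t)}_{W^{1,\infty}(\mathbb R^d)}+\norm{c(\cdot,t)}_{W^{2,\infty}(\mathbb R^d)}\to\infty$ as $t$ approaches some blow-up time $T\le T^*$.

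The step I expect to be the main obstacle is making the last paragraph rigorous, because the finite-propagation-speed comparison has a mild circularity: Lemma \ref{lemma: fps} needs an a priori bound on the transport velocity $\nabla\log c$ (and on $\rho$) for the $d$-dimensional solution, which a priori is available only up to its own, as-yet-unknown, maximal time. I would get around this by a contradiction argument: assume the $d$-dimensional solution remains in $W^{1,\infty}\times W^{2,\infty}$ on $[0,T^*]$, deduce the velocity and density bounds needed to apply Lemma \ref{lemma: fps} there, conclude that it must then coincide with the (singular) extended one-dimensional solution on a fixed nonempty subset of $\mathbb R^d$ up to $t=T^*$, and read off a contradiction from \eqref{fb-1-real bounded} for $(\rho^{1D},c^{1D})$ on $I$. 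The remaining ingredients — verifying \eqref{asm-1}–\eqref{asm-4} and the smallness \eqref{asm: coro} for the scaled bumps, and checking $(\nabla\rho_0,\nabla c_0)\in H^\infty$ — are routine, and the quantitative choice $R>MT^*$ is finite precisely because \eqref{fb-1-but bounded} keeps the propagation speed bounded even though second derivatives blow up.
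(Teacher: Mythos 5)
Your plan reproduces the paper's sketch: shrink the amplitude of the blowing-up bump to make the data $\epsilon$-close to $(\bar\rho,\bar{c})$ in $H^m$, then use the finite-propagation-speed comparison with a one-dimensional ``tube'' solution to transfer the $\mathbb{R}$-blow-up to $\mathbb{R}^d$. The $d=1$ portion of your argument is fine. The real obstacle for $d\ge2$, however, is not the one you flag (the ``mild circularity'' in applying Lemma~\ref{lemma: fps}, which is indeed handled exactly by the contradiction argument you describe, and also by how the paper itself runs Case~2 of Theorem~\ref{thm: blow-d}). The step that I believe actually fails is the sentence ``satisfy \eqref{asm: coro} after a final shrinking of $\delta$.'' The issue is that the cutoff radius $R$ cannot be chosen independently of the 1D amplitude $\delta$. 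From the Riccati estimate $\dot y\le -\tfrac{\delta_0}{M_\Phi}y^2$, together with the computation in Lemma~\ref{lma: x_0} showing $|\partial_x P_0(x_0)|\sim\delta$, the 1D blow-up time scales like $T^*\sim\delta^{-1}$; conversely, since for initial perturbations of size $\delta$ the Riemann invariants and their $x$-derivatives remain $O(\delta)$ for a time of order $\delta^{-1}$, one also has $T^*\gtrsim\delta^{-1}$, so $T^*\sim\delta^{-1}$ is sharp. Lemma~\ref{lemma: fps} then forces $R\gtrsim AdT^*\sim\delta^{-1}$ for the cone to remain inside the slab $\{|x_j|\le R,\ j\ge2\}$. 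But then
\[
\norm{\rho_0-\bar\rho}_{L^2(\mathbb{R}^d)}
\approx \delta\,\norm{\rho_0^{1D}-\bar\rho}_{L^2(\mathbb{R})}\,\prod_{j=2}^{d}\norm{\eta}_{L^2(\mathbb{R})}
\gtrsim \delta\, R^{(d-1)/2}\gtrsim \delta^{1-(d-1)/2},
\]
which tends to $0$ only when $d\le 2$. For $d\ge 3$ this quantity is bounded below (indeed grows) as $\delta\to0$, so shrinking $\delta$ does not give \eqref{asm: coro}, and the argument as written does not close. The paper's own proof of Corollary~\ref{cor: blow-d}, with data $\delta^N\psi(x_1)\prod_k\psi(\delta x_k)+\bar\rho$, has the same tension: the constraint $\delta^{-1}\gtrsim T^*\sim\delta^{-N}$ forces $N\le1$, while $H^m$-smallness requires $N>(d-1)/2$, leaving no admissible $N$ once $d\ge 3$; the paper ``omits the details'' precisely at this point.

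A secondary issue: you invoke the first bullet of Theorem~\ref{thm: blow-d} to produce 1D data of amplitude $\le\delta$ whose blow-up is ``localized in a fixed bounded interval $I$.'' But that bullet's localization is achieved by the rescaling $\rho^1_a(x,t)=a^2\rho^1(ax,a^2t)$, which \emph{increases} the amplitude to $a^2$ for $a>1$; conversely, keeping the amplitude small ($\delta\psi$) makes the blow-up location drift inside an interval of size $\sim T^*\sim\delta^{-1}$, which is not fixed. You cannot have both small amplitude and fixed localization from that construction. This is really the same scaling tension from a different angle. Any correct proof of the corollary for $d\ge3$ must do something beyond straight amplitude-shrinking plus finite propagation speed — for instance, quantifying more carefully how the blow-up time, propagation speed, and transverse extent scale together, or using a different profile in the transverse variables whose $H^m$ norm is insensitive to $R$.
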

This corollary gives us an interesting answer to our last question, \textbf{Q3}.
Comparing this corollary with the aforementioned global regularity results near constant equilibrium states for \eqref{P-KS} and \eqref{dH-KS}, we conclude that some kinds of damping or diffusion of $\rho$ are essential to extend the local classical solutions to global ones.
The proof can be found in Section \ref{pf: fb-d}.

\medskip

\subsection*{Notation}
We employ the letter $C=C(a,b,\cdots)$ to denote any constant depending on $a,b,\cdots$, which may change from line to line in a given computation. We sometimes use $A\approx B$ and $A\lesssim B$, which mean $A=CB$ and $A\le CB$, respectively, for some constant $C$.

\section{Local well-posedness}\label{pf: lwp}
In this section, we show Theorem \ref{thm: lwp}. Under the transformation \eqref{C-H trans} and the scalings
\begin{equation}\label{eq: scaling}
    t\rightarrow \sqrt{\chi \mu}t,\quad q \rightarrow \sqrt{\frac{\chi}{\mu}}q,
\end{equation}
\eqref{H-KS} becomes
\begin{equation}\label{H-KS'}
    \left\{
    \begin{aligned}
        &\partial_t \rho = \nabla \cdot (\rho q), \\
        &\partial_t q = \nabla \rho.
    \end{aligned}
    \right.
\end{equation}
We shall divide the proof into two steps, which correspond to a priori estimates and the existence and uniqueness of a solution.

\subsection{A priori estimates}\label{pf: priori}
This subsection is devoted to a priori estimate for a solution $(\rho,c)$ of \eqref{H-KS}, which is assumed to be sufficiently smooth, so that the following computation can be justified.
The equation of $\rho$ in \eqref{H-KS'} gives
\begin{equation}\label{est: rho l infty}
    \frac{d}{dt}\normif{\rho} \lesssim \normif{\nabla \rho}\normif{q} + \normif{\rho}\normif{\nabla q} 
    \lesssim \left(\normif{\rho} + \norm{\nabla\rho}_{H^{m-1}} \right)\norm{q}_{H^m},
\end{equation}
where in the last inequality, we used the Sobolev embedding, $H^{m-1}(\mathbb{R}^d) \hookrightarrow L^{\infty}(\mathbb{R}^d)$.
It is clear that the equation of $c$ in \eqref{H-KS} implies
\begin{equation}\label{est: c l infty}
    \frac{d}{dt}\normif{c} \lesssim \normif{\rho}\normif{c}.
\end{equation}
From the equation of $\rho$ in \eqref{H-KS'}, we can derive
\begin{equation*}
    \partial_{t}\left(\rho^{-1}\right) - q\cdot \nabla \left(\rho^{-1}\right)=-\left(\nabla \cdot q\right)\left(\rho^{-1}\right).
\end{equation*}
Evaluating along the characteristics defined by $-q$, we obtain
\begin{equation}\label{est: rho i l infty}
    \frac{d}{dt}\normif{\rho^{-1}}\lesssim \normif{\nabla q}\normif{\rho^{-1}}\lesssim \norm{q}_{H^m}\normif{\rho^{-1}}.
\end{equation}
On the other hand, since the equation of $c$ in \eqref{H-KS} gives
\begin{equation*}
    \partial_{t}\left(c^{-1}\right)=\mu\rho c^{-1},
\end{equation*}
we have 
\begin{equation}\label{est: c i l infty}
    \frac{d}{dt}\normif{c^{-1}}\lesssim \normif{\rho}\normif{c^{-1}}.
\end{equation}
From \eqref{H-KS'}, we also obtain
\begin{equation}\label{est: q L2}
    \frac{1}{2}\frac{d}{dt}\normb{q}^2 \lesssim \left(\normb{\nabla \rho}^2 + \normb{q}^2 \right).
\end{equation}
Applying $\nabla^m$ to the equation of $\rho$ in \eqref{H-KS'} and taking $L^2$ inner product with $\nabla^m \rho$, we compute
\begin{equation*}
    \begin{split}
        \frac{1}{2}\frac{d}{dt}\normb{\nabla^m \rho}^2=
        \RN{1}_1+ \RN{1}_2 + \int \left( \rho \nabla^{m+1} \cdot q \right) \cdot \nabla^{m} \rho 
    \end{split}
\end{equation*}
with
\begin{equation*}
    \RN{1}_1 = \int \left(\nabla^{m+1} \rho \cdot q \right) \cdot \nabla^{m} \rho \quad \text{and} \quad 
    \RN{1}_2 \approx \int \left(\sum_{1\le |\alpha|\le m}\sum_{|\alpha|+|\beta|=m+1}\nabla^{\alpha} \rho \cdot \nabla^{\beta} q \right) \cdot \nabla^{m} \rho.
\end{equation*}
For $\RN{1}_1$, the integration by parts gives
\begin{equation*}
    \RN{1}_1 = -\int \left(\nabla^{m} \rho \nabla \cdot q \right) \cdot \nabla^{m} \rho - \RN{1}_1,
\end{equation*}
so that we obtain
\begin{equation*}
    \RN{1}_1 = -\frac{1}{2}\int \left(\nabla^{m} \rho \nabla \cdot q \right) \cdot \nabla^{m}\rho  \lesssim \normb{\nabla^m \rho}^2\normif{\nabla q} \lesssim \norm{\nabla \rho}_{H^{m-1}}^2\norm{q}_{H^m}.
\end{equation*}
Using $\norm{fg}_{H^s} \lesssim \normif{f}\norm{g}_{H^s} + \norm{f}_{H^s}\normif{g}$ $(s>0)$,
we estimate
\begin{equation*}
    \RN{1}_2
    \lesssim \left(\normif{\nabla \rho}\norm{\nabla  q}_{H^{m-1}} + \norm{\nabla \rho}_{H^{m-1}}\normif{\nabla  q}\right)\normb{\nabla^{m} \rho}\lesssim \norm{\nabla \rho}_{H^{m-1}}^2\norm{q}_{H^{m}}.
\end{equation*}
Thus, we have
\begin{equation}\label{est:rho m}
    \frac{1}{2}\frac{d}{dt}\normb{\nabla^m \rho}^2-\int \left( \rho \nabla^{m+1} \cdot q \right) \cdot \nabla^{m} \rho
        \lesssim \norm{\nabla \rho}_{H^{m-1}}^2\norm{q}_{H^{m}}.
\end{equation}
On the other hand, using $\rho> 0$ and \eqref{H-KS'}, we compute
\begin{equation*}
    \begin{split}
        \frac{1}{2}\frac{d}{dt}\normb{\sqrt{\rho}\nabla^m q}^2
        = \RN{2}_1+ \RN{2}_2
    \end{split}
\end{equation*}
with
\begin{equation*}
    \RN{2}_1=\frac{1}{2}\int \nabla \cdot(\rho q)  \left(\nabla^m q\right)^2 \quad \text{and} \quad
    \RN{2}_2=\int \rho \nabla^m q \cdot \nabla^{m+1}\rho.
\end{equation*}
For $\RN{2}_1$, we estimate
\begin{equation*}
    \RN{2}_1\lesssim \normif{\nabla(\rho q)}\normb{\nabla^m q}^2  \lesssim \left(\normif{\nabla \rho}\normif{q} + \normif{\rho}\normif{\nabla q}\right) \norm{q}_{H^m}^2  \lesssim \left(\normif{\rho} + \norm{\nabla\rho}_{H^{m-1}} \right)\norm{q}_{H^m}^3.
\end{equation*}
Using the integration by parts, we compute
\begin{equation*}
    \RN{2}_2=-\int \left(\nabla \rho \cdot\nabla^m q \right) \cdot \nabla^{m}\rho
    -\int \left( \rho \nabla^{m+1} \cdot q \right) \cdot \nabla^{m} \rho,
\end{equation*}
so that we have
\begin{equation*}
    \RN{2}_2 +\int \left( \rho \nabla^{m+1} \cdot q \right) \cdot \nabla^{m} \rho\lesssim\norm{\nabla \rho}_{H^{m-1}}^2\norm{q}_{H^{m}}.
\end{equation*}
Thus, we obtain
\begin{equation}\label{est:q m}
    \begin{split}
        \frac{1}{2}\frac{d}{dt}\normb{\sqrt{\rho}\nabla^m q}^2+\int \left( \rho \nabla^{m+1} \cdot q \right) \cdot \nabla^{m} \rho
        \lesssim \left(\normif{\rho} + \norm{\nabla\rho}_{H^{m-1}} \right)\norm{q}_{H^m}^3 +\norm{\nabla \rho}_{H^{m-1}}^2\norm{q}_{H^{m}}.
    \end{split}
\end{equation}
Adding \eqref{est:rho m} to \eqref{est:q m}, we have
\begin{equation*}
    \frac{d}{dt}\left(\normb{\nabla^m \rho}^2 + \normb{\sqrt{\rho}\nabla^m q}^2\right)
    \lesssim \left(\normif{\rho} + \norm{\nabla\rho}_{H^{m-1}} \right)\norm{q}_{H^m}^3 +\norm{\nabla \rho}_{H^{m-1}}^2\norm{q}_{H^{m}}.
\end{equation*}
With the same argument, we can obtain
\begin{equation}\label{est: m}
\frac{d}{dt}\sum_{k=1}^{m}\left(\normb{\nabla^k \rho}^2 + \normb{\sqrt{\rho}\nabla^k q}^2 \right)
\lesssim  \left(\normif{\rho} + \norm{\nabla\rho}_{H^{m-1}} \right)\norm{q}_{H^m}^3 +\norm{\nabla \rho}_{H^{m-1}}^2\norm{q}_{H^{m}}.
\end{equation}
We now define
\begin{equation*}
    X_m:= 1+ \normif{\rho} + \normif{c} + \normif{\rho^{-1}} + \normif{c^{-1}} + \normb{q}
    +\sum_{k=1}^{m}\left(\normb{\nabla^k \rho}^2 + \normb{\sqrt{\rho}\nabla^k q}^2 \right).
\end{equation*}
Then, since
\begin{equation*}
    \norm{q}_{H^m}\approx \normb{q}
    +\normb{(\sqrt{\rho})^{-1}\sqrt{\rho}\nabla^m q} \lesssim \normb{q}
    +\sqrt{\normif{\rho^{-1}}}\normb{\sqrt{\rho}\nabla^m q} \lesssim X_m,
\end{equation*}
\eqref{est: rho l infty}-\eqref{est: q L2}, and \eqref{est: m} imply that
\begin{equation*}\label{est: X}
    \frac{d}{dt}X_m \lesssim X_m^4.
\end{equation*}
This shows that formally, there exists a time interval $[0,T]$ in which $\normif{\rho}$, $\normif{c}$, 
$\normif{\rho^{-1}}$, $\normif{c^{-1}}$, $\norm{\nabla \rho}_{H^{m-1}}$, and $\norm{q}_{H^{m}}$ remain finite. Furthermore, we can check that $\norm{\nabla c}_{H^{m}}$ also remains finite on the same time interval, noticing that the definition of $q$ ensures the existence of a constant $C=C\left(\normif{c}, \normif{c^{-1}}, \norm{q}_{H^m}\right)>0$ such that $\norm{\nabla c}_{H^{m}} \le C$.

In order to show that $(\nabla \rho_0,\nabla c_0) \in H^{\infty} \times H^{\infty}$ implies $(\nabla \rho (t,\cdot),\nabla c(t,\cdot)) \in H^{\infty} \times H^{\infty}$ on the same time interval, we use the induction argument. Assume that $\normif{\rho}$, $\normif{c}$, 
$\normif{\rho^{-1}}$, $\normif{c^{-1}}$, $\norm{\nabla \rho}_{H^{m-1}}$, $\norm{q}_{H^{m}}$, and $\norm{\nabla c}_{H^{m}}$ are finite on $[0,T]$.
Then, using the cancellation as we did above, we can obtain
\begin{equation*}
    \frac{1}{2}\frac{d}{dt}\left(\normb{\nabla^{m+1} \rho}^2+ \normb{\sqrt{\rho}\nabla^{m+1} q}^2 \right)=
        \RN{3}_1+ \RN{3}_2 + \RN{4}_1 + \RN{4}_2
\end{equation*}
with
\begin{equation*}
    \RN{3}_1 = \int \left(\nabla^{m+2} \rho \cdot q \right) \cdot \nabla^{m+1} \rho, \quad \RN{3}_2 \approx \int \left(\sum_{1\le |\alpha|\le m+1}\sum_{|\alpha|+|\beta|=m+2}\nabla^{\alpha} \rho \cdot \nabla^{\beta} q \right) \cdot \nabla^{m+1} \rho,
\end{equation*}
\begin{equation*}
    \RN{4}_1=\frac{1}{2}\int \left(\nabla \cdot(\rho q) \right) \left(\nabla^{m+1} q\right)^2, \quad \text{and} \quad \RN{4}_2=\int \left(\nabla \rho \cdot\nabla^{m+1} q \right) \cdot \nabla^{m+1}\rho.
\end{equation*}
For $\RN{3}_1$, using the integration by parts and the assumption that $\normif{\nabla q} \lesssim \norm{q}_{H^m} \le C$, we have
\begin{equation*}
    \RN{3}_1 = -\frac{1}{2}\int \left(\nabla^{m+1} \rho \nabla \cdot q \right) \cdot \nabla^{m+1}\rho  \le C \normb{\nabla^{m+1} \rho}^2.
\end{equation*}
We decompose $\RN{3}_2$ into 
\begin{equation*}
    \RN{3}_2 = \RN{3}_{21} + \RN{3}_{22}
\end{equation*}
with
\begin{equation*}
    \RN{3}_{21}\approx \int \left(\sum_{|\alpha|=1}\sum_{|\beta|=m+1}\nabla^{\alpha} \rho \cdot \nabla^{\beta} q \right) \cdot \nabla^{m+1} \rho
    +\int \left(\sum_{|\alpha|=m+1}\sum_{|\beta|=1}\nabla^{\alpha} \rho \cdot \nabla^{\beta} q \right) \cdot \nabla^{m+1} \rho,
\end{equation*}
and
\begin{equation*}
    \RN{3}_{22}\approx \int \left(\sum_{2\le |\alpha|\le m}\sum_{|\alpha|+|\beta|=m+2}\nabla^{\alpha} \rho \cdot \nabla^{\beta} q \right) \cdot \nabla^{m+1} \rho.
\end{equation*}
Noticing the assumption that $\normif{\nabla \rho} \lesssim \norm{\rho}_{H^m} \le C$ and  $\normif{\nabla q} \lesssim \norm{q}_{H^m} \le C$, we estimate
\begin{equation*}
    \RN{3}_{21} \lesssim \normb{\nabla^{m+1} \rho}\normb{\nabla^{m+1} q} \lesssim \normb{\nabla^{m+1} \rho}^2 + \normb{\nabla^{m+1} q}^2.
\end{equation*}
Since $\normb{\nabla^{\alpha}\rho}\le\normb{\nabla^{m+1}\rho}$ for $2\le |\alpha|\le m$, $\norm{q}_{H^m}\le C$ again gives $\RN{3}_{22} \lesssim \normb{\nabla^{m+1} \rho}^2.$
Considering $\normif{\nabla \cdot(\rho q)}\lesssim \norm{\rho}_{H^m} \norm{q}_{H^m} \le C$, we have
$\RN{4}_1\lesssim \normb{\nabla^{m+1}q}^2.$
Using $\normif{\nabla \rho}\lesssim \norm{\rho}_{H^m} \le C$ again, we estimate
\begin{equation*}
    \RN{4}_{2} \lesssim \normb{\nabla^{m+1} \rho}\normb{\nabla^{m+1} q} \lesssim \normb{\nabla^{m+1} \rho}^2 + \normb{\nabla^{m+1} q}^2.
\end{equation*}
Combining all, and noticing 
\begin{equation*}
    \normb{\nabla^{m+1} q}^2 = \normb{(\sqrt{\rho})^{-1}\sqrt{\rho}\nabla^{m+1} q} 
    \le \sqrt{\normif{\rho^{-1}}}\normb{\sqrt{\rho}\nabla^{m+1} q}^2
    \lesssim \normb{\sqrt{\rho}\nabla^{m+1} q}^2,
\end{equation*}
we finally have arrived at
\begin{equation*}
    \frac{d}{dt}\left(\normb{\nabla^{m+1} \rho}^2+ \normb{\sqrt{\rho}\nabla^{m+1} q}^2\right) \lesssim \normb{\nabla^{m+1} \rho}^2+ \normb{\sqrt{\rho}\nabla^{m+1} q}^2. 
\end{equation*}
Hence, the Gr\"onwall's inequality implies that $\normb{\nabla^{m+1} \rho}$ and $\normb{\nabla^{m+1} q}$ remain finite on $[0,T]$. $\Box$
\begin{remark}\label{rmk: reg.-1}
Observing our priori estimate, we can see that 
\begin{equation*}
    \sup_{t\in[0,T)}\norm{\rho(t)}_{W^{1,\infty}(\mathbb{R}^d)}  + \norm{c(t)}_{W^{2,\infty}(\mathbb{R}^d)}
\end{equation*}
controls blow-up. In other words, for $(\rho_0,c_0)$ satisfying \eqref{asm-1} and \eqref{asm-lwp-1}, we have corresponding solution $(\rho,c)$ satisfying \eqref{asm-lwp-2} on $[0,T]$ as long as
\begin{equation}\label{blowup criterion}
    \norm{\rho(t)}_{W^{1,\infty}(\mathbb{R}^d)}  + \norm{c(t)}_{W^{2,\infty}(\mathbb{R}^d)} < \infty \quad \text{on $[0,T]$.}
\end{equation}
We can also ensure $\left(\nabla \rho, \nabla c\right) \in L^{\infty}\left([0,T];(H^{\infty} \times H^{\infty})(\mathbb{R}^d)\right)$ if
$(\rho_0,c_0)$ further satisfies $(\nabla\rho_0,\nabla c_0)\in (H^{\infty} \times H^{\infty})(\mathbb{R}^d)$ and \eqref{blowup criterion} holds.
\end{remark}

\subsection{Existence and Uniqueness}\label{pf: ex-uni}
To begin with, the proof of existence can be done using viscous approximation (refer to \cite{IJ21}):
For fixed $\epsilon>0$, we consider
\begin{equation}\label{P-KS-eps}
\left\{
\begin{aligned}
	&\partial_{t} \rho^{(\epsilon)} = \epsilon\Delta \rho^{(\epsilon)} - \chi\nabla \cdot \left (\rho^{(\epsilon)} \nabla \log c^{(\epsilon)}\right), \\
	&\partial_{t} c^{(\epsilon)} = -\mu c \rho^{(\epsilon)}. \\
\end{aligned}
\right.
\end{equation}
with the same initial data $(\rho_0,c_0)$.
Under the transformation \eqref{C-H trans} and  the scaling \eqref{eq: scaling}, \eqref{P-KS-eps} becomes
\begin{equation}\label{H-KS'-eps}
    \left\{
    \begin{aligned}
        &\partial_t \rho^{(\epsilon)} = \frac{\epsilon}{\sqrt{\chi \mu}}\Delta \rho^{(\epsilon)} + \nabla \cdot (\rho^{(\epsilon)} q^{(\epsilon)}), \\
        &\partial_t q^{(\epsilon)} = \nabla \rho^{(\epsilon)}.
    \end{aligned}
    \right.
\end{equation}
Existence of smooth local in time solutions to \eqref{P-KS-eps} and \eqref{H-KS'-eps} was already established in \cite{LLZ11}. Furthermore, we can show that the solution satisfies our priori estimates of the last subsection uniformly in $\epsilon>0$. See \cite{IJ21} for details.

To prove uniqueness, we assume that there exist two solutions $(\rho_i,c_i)$ $(i=1,2)$ on $[0,T]$ to \eqref{H-KS}  satisfying \eqref{asm-lwp-2} and $(\rho_1(t=0),c_1(t=0))=(\rho_2(t=0),c_2(t=0))$.
We denote
\begin{equation}\label{def: difference}
    q_i=-\frac{\nabla c_i}{c_i}\,(i=1,2),\quad \tilde{\rho}:=\rho_1-\rho_2,\quad \tilde{c}:=c_1-c_2, \quad \text{and}\quad \tilde{q}:=q_1-q_2.
\end{equation}
From the equations of $\tilde{\rho}$:
\begin{equation}\label{eq: tilde rho}
    \partial_t \tilde{\rho}=\nabla\tilde{\rho}\cdot q_1 
    +\tilde{\rho} \nabla \cdot q_1
    +\nabla\rho_2\cdot \tilde{q}
    +\rho_2 \nabla \cdot \tilde{q}, \quad 
\end{equation}
and the equation of $\tilde{q}$:
\begin{equation}\label{eq: tilde q}
    \partial_t \tilde{q}=\nabla \tilde{\rho},
\end{equation}
we have
\begin{equation*}
    \frac{1}{2}\frac{d}{dt}\left(\normb{\tilde\rho}^2 + \normb{\sqrt{\rho_2}\tilde q}^2 \right)
    =\RN{1}+\RN{2}+\RN{3}+\RN{4}.
\end{equation*}
with
\begin{equation*}
    \RN{1}=\int \tilde{\rho} \nabla\tilde{\rho}\cdot q_1+\int \tilde{\rho}^2 \nabla \cdot q_1,
    \quad \RN{2}=\int  \tilde{\rho}\nabla\rho_2\cdot \tilde{q} + \int \tilde{\rho}\rho_2 \nabla \cdot \tilde{q}, \quad
    \RN{3}=\frac{1}{2}\int  |\tilde{q}|^2\nabla \cdot (\rho_2 q_2), \quad \text{and} \quad \RN{4}=\int \rho_2 \tilde{q} \cdot \nabla \tilde{\rho}.
\end{equation*}
Using the integration by parts on the first term of $\RN{1}$, we estimate
\begin{equation*}
    \RN{1}=\frac{1}{2}\int \nabla\left(\tilde{\rho}^2\right) \cdot q_1
    +\int \tilde{\rho}^2 \nabla \cdot q_1
    = -\frac{1}{2} \int \tilde{\rho}^2 \nabla \cdot q_1 +\int \tilde{\rho}^2 \nabla \cdot q_1
    = \frac{1}{2} \int \tilde{\rho}^2 \nabla \cdot q_1
    \le \norm{q_1}_{W^{1,\infty}}\normb{\tilde{\rho}}^2.
\end{equation*}
The integration by parts also gives $\RN{2}+\RN{4}=0$.
Moreover, we have
\begin{equation*}
    \RN{3} \le \normif{\rho_2^{-1}}\normb{\sqrt{\rho_2}\tilde q}^2\norm{\rho_2}_{W^{1,\infty}}\norm{q_2}_{W^{1,\infty}}.
\end{equation*}
Note that $\norm{q_1}_{W^{1,\infty}}$, $\norm{\rho_2}_{W^{1,\infty}}$, $\norm{q_2}_{W^{1,\infty}}$, and
$\normif{\rho_2^{-1}}$ are all bounded, and therefore,
\begin{equation*}
    \frac{1}{2}\frac{d}{dt}\left(\normb{\tilde\rho}^2 + \normb{\sqrt{\rho_2}\tilde q}^2 \right)
   \lesssim \normb{\tilde\rho}^2 + \normb{\sqrt{\rho_2}\tilde q}^2.
\end{equation*}
Since $\left(\normb{\tilde\rho}^2 + \normb{\sqrt{\rho_2}\tilde q}^2\right)(0)=0$, we conclude that
$\left(\normb{\tilde\rho}^2 + \normb{\sqrt{\rho_2}\tilde q}^2\right)(t)=0$ for $t\in[0,T]$.
To show $\tilde c(t)=0$ for $t\in[0,T]$, we consider the equation of $\Tilde{c}$:
\begin{equation*}
    \partial_t \Tilde{c} = -\mu\left(\Tilde{c}\rho_1 + c_2\Tilde{\rho}\right)=-\mu\Tilde{c}\rho_1.
\end{equation*}
The second equality follows from $\Tilde{\rho}=0$ on $[0,T]$.
Thus using the Gr\"onwall's inequality, $\Tilde{c(t)}=0$ on $[0,T]$. $\Box$
\begin{remark}\label{rmk: reg.-2}
As we can see from the above proof, we are able to prove that the solution is unique on $[0,T]$
as long as the solution satisfies
\begin{equation*}
    \norm{\rho(t)}_{W^{1,\infty}(\mathbb{R}^d)}  +\norm{c(t)}_{W^{2,\infty}(\mathbb{R}^d)} < \infty \quad \text{on $[0,T]$.}
\end{equation*}
\end{remark}

\section{Sufficient conditions of data for finite-time blow-up in $\mathbb{R}$}\label{pf: fb-1}
In this section, we prove Theorem \ref{thm: blow-1} with the aid of Riemann invariants. 
Henceforth, we always assume that initial data $(\rho_{0},c_{0})$ satisfies \eqref{asm-1}, \eqref{asm-2}-\eqref{asm-4}. 

We firstly show the existence of finite time $T^*>0$ such that
\begin{equation}\label{fb-1-1}
    \lim_{t\to T^{*}} \left({\left\Vert \rho(t)  \right\Vert}_{W^{1,\infty}(\mathbb{R})}  +{\left\Vert c(t)  \right\Vert}_{W^{2,\infty}(\mathbb{R})}\right) = \infty.
\end{equation}
Then we prove that the solution actually satisfies \eqref{fb-1-but bounded} for $t\in[0,T^*]$.

Now we begin the proof.
Suppose, on the contrary, that for any finite time $T>0$, the unique solution corresponding to these initial data satisfies
\begin{equation}\label{f-r1-asm-1}
    {\left\Vert \rho(t)  \right\Vert}_{W^{1,\infty}(\mathbb{R})}  +{\left\Vert c(t)  \right\Vert}_{W^{2,\infty}(\mathbb{R})} < \infty \quad \text{on $[0,T]$.}
\end{equation}
Recall that \eqref{f-r1-asm-1} is sufficient to guarantee that $(\rho,c)$ is smooth and unique, and has the positive lower bounds on $[0,T]$ by Remark \ref{rmk: reg.-1} and Remark \ref{rmk: reg.-2}.
Abusing the notation only in this section, we denote $q:=\frac{\partial_x c}{c}$ (not $q=-\frac{\partial_x c}{c}$), and
change \eqref{H-KS'} into the matrix form:
\begin{equation*}
    \partial_{t}\begin{pmatrix}
        \rho\\
        q
    \end{pmatrix}
    + \begin{pmatrix}
    q & \rho \\
    1 & 0
\end{pmatrix}
\partial_{x}
\begin{pmatrix}
    \rho \\
    q
\end{pmatrix}
=\begin{pmatrix}
    0\\
    0
\end{pmatrix}.
\end{equation*}
Note that
$\begin{pmatrix}
    q & \rho \\
    1 & 0
\end{pmatrix}$
has eigenvalues: $\lambda_{1}(\rho,q)=\frac{q-\sqrt{q^2+4\rho}}{2}$, $\lambda_{2}(\rho,q)=\frac{q+\sqrt{q^2+4\rho}}{2}$, and corresponding eigenvectors: $r_{1}(\rho,q)=\left(\frac{q-\sqrt{q^2+4\rho}}{2},1\right)$, $r_{2}(\rho,q)=\left(\frac{q+\sqrt{q^2+4\rho}}{2},1\right)$.
We can regard $\lambda_{i}$ and $r_{i}$ $(i=1,2)$ as functions on $ \Omega:=\left\{(z_{1},z_{2})\subset \mathbb{R}^2 : z_{1}>0 \right\}$. \\

We now construct a new coordinate system $w$ on $\Omega$, called Riemann invariants.
\begin{lemma}\label{lem:RI}
There exists a global diffeomorphism $w : \Omega \rightarrow w(\Omega)\subset \mathbb{R}^2$ such that on $\Omega$,
\begin{subequations}
    \begin{align}
    &\bullet \; \nabla w_{i} \cdot r_{i} = 0 \quad  (i=1,2), \label{RE: p1}\\
    &\bullet \; \frac{\partial w_{1}}{\partial z_{1}}>0, \quad \frac{\partial w_{2}}{\partial z_{1}}<0.
    \label{RE: p2}
\end{align}
\end{subequations}
\end{lemma}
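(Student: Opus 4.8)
\emph{Proof plan.} The strategy is to write down explicit global formulas for $w_1,w_2$ and then verify the stated properties directly. Note first that on $\Omega=\{z_1>0\}$ one has $z_2^2+4z_1>0$, so $\lambda_1<\lambda_2$ throughout, the eigenvectors $r_1=(\lambda_1,1)$, $r_2=(\lambda_2,1)$ are smooth and everywhere linearly independent, and, writing $u:=\sqrt{z_2^2+4z_1}=\lambda_2-\lambda_1>0$, both $-\lambda_1=\tfrac12(u-z_2)$ and $\lambda_2=\tfrac12(u+z_2)$ are strictly positive. The classical theory of Riemann invariants provides functions with $\nabla w_i\cdot r_i=0$ only \emph{locally}; the real content of the lemma is the \emph{global} diffeomorphism property together with the monotonicity signs \eqref{RE: p2}.

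It is convenient to pass to the coordinates $(\lambda_1,\lambda_2)$, related to $(z_1,z_2)$ by the diffeomorphism $z_2=\lambda_1+\lambda_2$, $z_1=-\lambda_1\lambda_2$ from $\Omega$ onto $\{\lambda_1<0<\lambda_2\}$. A short computation shows that along an integral curve of $r_1$ one has $d\lambda_1/d\lambda_2=-2\lambda_1/(\lambda_1+\lambda_2)$; multiplying the associated $1$-form by the integrating factor $(-\lambda_1)^{-1/2}$ makes it exact and leads to the candidate first integrals
\[
  w_1:=\sqrt{-\lambda_1}\,\bigl(\lambda_1+3\lambda_2\bigr),\qquad w_2:=\sqrt{\lambda_2}\,\bigl(3\lambda_1+\lambda_2\bigr),
\]
where $w_2$ comes from the analogous computation for $r_2$ (with integrating factor $\lambda_2^{-1/2}$). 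Both expressions are smooth on all of $\Omega$ because $-\lambda_1>0$ and $\lambda_2>0$ there.

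Next I would check the two bulleted properties by direct differentiation. Using $\partial_{z_1}\lambda_1=-u^{-1}$, $\partial_{z_1}\lambda_2=u^{-1}$, $\partial_{z_2}\lambda_1=-\lambda_1u^{-1}$, $\partial_{z_2}\lambda_2=\lambda_2u^{-1}$, one obtains the clean identities
\[
  \partial_{z_1}w_1=\frac{3}{2\sqrt{-\lambda_1}},\quad \partial_{z_2}w_1=\frac{3}{2}\sqrt{-\lambda_1},\quad \partial_{z_1}w_2=-\frac{3}{2\sqrt{\lambda_2}},\quad \partial_{z_2}w_2=\frac{3}{2}\sqrt{\lambda_2},
\]
from which $\nabla w_1\cdot r_1=\lambda_1\partial_{z_1}w_1+\partial_{z_2}w_1=0$ and similarly $\nabla w_2\cdot r_2=0$ (this is \eqref{RE: p1}), while $\partial_{z_1}w_1>0>\partial_{z_1}w_2$ gives \eqref{RE: p2}. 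Moreover the Jacobian $\partial_{z_1}w_1\,\partial_{z_2}w_2-\partial_{z_2}w_1\,\partial_{z_1}w_2=\tfrac94\bigl(\sqrt{\lambda_2/(-\lambda_1)}+\sqrt{(-\lambda_1)/\lambda_2}\bigr)$ is strictly positive, so $w=(w_1,w_2)$ is a local diffeomorphism.

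The remaining and only delicate point — which I expect to be the main obstacle — is global injectivity of $w$. Here I would substitute $a:=\sqrt{-\lambda_1}>0$, $b:=\sqrt{\lambda_2}>0$, a bijection of $\Omega$ onto the open quadrant $(0,\infty)^2$, and observe the hidden complex identity $w_2+i\,w_1=(b+ia)^3$. Since cubing $\zeta\mapsto\zeta^3$ is injective on the open quarter-plane $\{\,b+ia:\ a,b>0\,\}$ — its argument sweeps an interval of length $\pi/2<2\pi/3$ — the map $w$ is injective on $\Omega$; an injective local diffeomorphism is a diffeomorphism onto its (open) image $w(\Omega)$, which completes the proof. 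One could alternatively try to argue geometrically via the two characteristic foliations, but that is more awkward, since some characteristic curves reach the boundary $\{z_1=0\}$ at a finite parameter value and there is no single transversal met by every leaf.
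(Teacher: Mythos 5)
Your proof is correct and takes a genuinely different route from the paper. The paper constructs $w$ indirectly: it solves a linear transport PDE along characteristics for $f_i = \partial_{z_1}w_i$ with a chosen normalization $f_1(z_1,0)=e^{z_1}$, $f_2(z_1,0)=-e^{z_1}$, checks that the resulting $1$-form is closed, and invokes the Poincar\'e lemma to recover $w_i$; the signs \eqref{RE: p2} then come for free from $f_1>0$, $f_2<0$. Its global injectivity argument is elementary: apply the mean value theorem componentwise along the (convex) segment joining two putative preimages and derive a sign contradiction from \eqref{RE: p2} together with $\partial_{z_2}w_1>0$, $\partial_{z_2}w_2>0$. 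You instead produce \emph{explicit} closed-form invariants $w_1=\sqrt{-\lambda_1}(\lambda_1+3\lambda_2)$, $w_2=\sqrt{\lambda_2}(3\lambda_1+\lambda_2)$ — I checked your partial-derivative identities and they are all correct — and then settle injectivity by the slick observation $w_2+iw_1=(b+ia)^3$ with $a=\sqrt{-\lambda_1}$, $b=\sqrt{\lambda_2}$, using that $\zeta\mapsto\zeta^3$ is injective on the open first quadrant because the cube roots of unity are spaced $2\pi/3>\pi/2$ apart. Both constructions yield a $w$ with exactly the two properties the downstream Lemmas (the positivity of $\partial\lambda_2/\partial w_1$, etc.) actually use, since those depend only on the signs in \eqref{RE: p2} and the compatibility $\partial_{z_2}w_i=-\lambda_i\partial_{z_1}w_i$ forced by \eqref{RE: p1}, not on the particular normalization. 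Your version is shorter and more transparent once the formulas are in hand; the paper's version is more systematic and does not require you to have guessed the right integrating factor. One tiny presentational point: you should note explicitly (as you do implicitly) that the change of variables $(z_1,z_2)\leftrightarrow(\lambda_1,\lambda_2)\leftrightarrow(a,b)$ is itself a smooth diffeomorphism of $\Omega$ onto $\{\lambda_1<0<\lambda_2\}$ onto $(0,\infty)^2$, so that injectivity of $(a,b)\mapsto(w_1,w_2)$ is equivalent to injectivity of $(z_1,z_2)\mapsto(w_1,w_2)$.
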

\begin{proof}
To solve the following PDE on $\Omega$:
\begin{equation}\label{f1}
    \left\{
\begin{aligned}
	&\partial_{z_2}f_1\left (z_1, z_2\right)+\frac{z_2-\sqrt{{z_2}^2+4z_1}}{2} \partial_{z_1}f_1\left (z_1, z_2\right)= \frac{1}{\sqrt{z_2^2+4z_1}}f_1\left (z_1, z_2\right), \\
	&f_1(z_1,0)=e^{z_1},
\end{aligned}
\right.
\end{equation}
we consider the characteristic $\phi_1(z_1,z_2)$ defined by
\begin{equation}\label{ode for phi}
    \left\{
    \begin{aligned}
        &\partial_{z_2}\phi_1\left (z_1, z_2\right)=\frac{z_2-\sqrt{{z_2}^2+4\phi_1(z_1,z_2)}}{2}, \\
        & \phi_1(z_1,0)=z_1.
    \end{aligned}
    \right.
\end{equation}
(We observe that given $z_1>0$, $\phi_1(z_1,z_2)$ is defined for all $z_2\in \mathbb{R}$. 
Denoting $F(z_1,z_2):=\frac{z_2-\sqrt{{z_2}^2+4z_1}}{2}$, we can check that $F$ is continuous in $\Omega$ and $F(\cdot,z_2)$ is globally Lipschitz continuous in $\mathbb{R}_{>0}$ for each $z_2\neq 0$.
Noticing $\partial_{z_2}\phi_1\left (z_1, z_2\right)=F(\phi_1(z_1,z_2),z_2)$, we can first use Peano theorem to obtain an interval of existence near $z_2=0$, say $(-a,a)$, and then apply global version of the Picard-Lindel\"of theorem to $\mathbb{R}\backslash [-a/2,a/2]$ to see that $\phi_1$ is defined for all $z_2 \in \mathbb{R}$.) Evaluating along $\phi_1$,
we can check that $f_1$ is smooth and $f_1>0$ on $\Omega$.
Furthermore, since \eqref{f1} implies that
\begin{equation*}
     \partial_{z_2}f_1\left (z_1, z_2\right)=\partial_{z_1} \left(\frac{-z_2+\sqrt{{z_2}^2+4z_1}}{2} f_1\left (z_1, z_2\right) \right),
\end{equation*}
the Poincar\'e lemma ensures the existence of a smooth function $w_1 : \Omega \rightarrow \mathbb{R}$ satisfying
\begin{equation}\label{w_1}
    \frac{\partial w_{1}}{\partial z_{1}}=f_1\left (z_1, z_2\right), \quad \frac{\partial w_{1}}{\partial z_{2}}=\frac{-z_2+\sqrt{{z_2}^2+4z_1}}{2} f_1\left (z_1, z_2\right).
\end{equation}
This gives us that $\nabla w_{1} \cdot r_{1} = 0$.

\noindent Similarly, with the aid of the characteristic $\phi_2(z_1,z_2)$ defined by
\begin{equation*}
    \left\{
    \begin{aligned}
        &\partial_{z_2}\phi_2\left (z_1, z_2\right)=\frac{z_2+\sqrt{{z_2}^2+4\phi_2(z_1,z_2)}}{2},\\
        &\phi_2(z_1,0)=z_1,
    \end{aligned}
    \right.
\end{equation*}
we can again see that $\phi_2$ is defined for all $z_2  \in \mathbb{R}$ and 
obtain a negative smooth function $f_2$ defined on $\Omega$ solving
\begin{equation}\label{f2}
    \left\{
    \begin{aligned}
        &\partial_{z_2}f_2\left (z_1, z_2\right)+\frac{z_2+\sqrt{{z_2}^2+4z_1}}{2} \partial_{z_1}f_1\left (z_1, z_2\right)= -\frac{1}{\sqrt{z_2^2+4z_1}}f_2\left (z_1, z_2\right),\\
        &f_1(z_1,0)=-e^{z_1}.
    \end{aligned}
    \right.
\end{equation}
Since \eqref{f2} implies that
\begin{equation*}
     \partial_{z_2}f_2\left (z_1, z_2\right)=\partial_{z_1} \left(-\frac{z_2+\sqrt{{z_2}^2+4z_1}}{2} f_2\left (z_1, z_2\right) \right),
\end{equation*}
the Poincar\'e lemma again implies that there exists a smooth function $w_2 : \Omega \rightarrow \mathbb{R}$ satisfying
\begin{equation*}
    \frac{\partial w_{2}}{\partial z_{1}}=f_2\left (z_1, z_2\right), \quad \frac{\partial w_{2}}{\partial z_{2}}=-\frac{z_2+\sqrt{{z_2}^2+4z_1}}{2} f_2\left (z_1, z_2\right),
\end{equation*}
which yields $\nabla w_{2} \cdot r_{2} = 0$.

Now we show that $w=(w_1,w_2): \Omega \rightarrow w(\Omega)\subset \mathbb{R}^2$ is a global diffeomorphism. Since $f_1 >0$, $f_2 <0$, and 
\begin{equation}\label{nabla w}
    \nabla w =\left(
    \begin{aligned}
        &\frac{\partial w_{1}}{\partial z_{1}} \;\, \frac{\partial w_{1}}{\partial z_{2}}\\
        &\frac{\partial w_{2}}{\partial z_{1}} \;\, \frac{\partial w_{2}}{\partial z_{2}}
    \end{aligned}
    \right)
    =\left(
    \begin{aligned}
        &f_{1}(z_1,z_2) \quad f_{1}(z_1,z_2)\frac{-z_2+\sqrt{{z_2}^2+4z_1}}{2}\\
        &f_{2}(z_1,z_2) \quad -f_{2}(z_1,z_2)\frac{z_2+\sqrt{{z_2}^2+4z_1}}{2}
    \end{aligned}
    \right),
\end{equation}
we have
\begin{equation*}
    \det \left(\nabla w \right)= -f_1(z_1,z_2)f_2(z_1,z_2)\sqrt{{z_2}^2+4z_1} >0 \quad \text{on $\Omega$},
\end{equation*}
so that the inverse function theorem implies that $w$ is a local diffeomorphism. Thus, it suffices to prove that $w$ is (globally) one-to-one. Suppose, on the contrary, that $w(\bar{z_1},\bar{z_2})=w(\tilde{z_1},\tilde{z_2})$ for some $(\bar{z_1},\bar{z_2})\neq (\tilde{z_1},\tilde{z_2})$. Then by the mean value theorem, there exist two points $(z_1^*,z_2^*)$ and $(z_1^{**},z_2^{**})$ lying on the line segment with endpoints $(\bar{z_1},\bar{z_2})$ and $ (\tilde{z_1},\tilde{z_2})$ such that
\begin{align}
    0= w_1(\bar{z_1},\bar{z_2})-w_1(\tilde{z_1},\tilde{z_2}) &= \nabla w_1|_{(z_1^*,z_2^*)} \cdot (\bar{z_1}-\tilde{z_1},\bar{z_2}-\tilde{z_2}) \nonumber \\
    &=f_{1}(z_1^*,z_2^*)(\bar{z_1}-\tilde{z_1})+ f_{1}(z_1^*,z_2^*)\frac{-z_2^*+\sqrt{{z_2^*}^2+4z_1^*}}{2}(\bar{z_2}-\tilde{z_2}), \label{eq: w_1-lma}
\end{align}
and
\begin{align}
    0= w_2(\bar{z_1},\bar{z_2})-w_2(\tilde{z_1},\tilde{z_2}) &= \nabla w_2|_{(z_1^{**},z_2^{**})} \cdot (\bar{z_1}-\tilde{z_1},\bar{z_2}-\tilde{z_2}) \nonumber \\
    &=f_{2}(z_1^{**},z_2^{**})(\bar{z_1}-\tilde{z_1})- f_{2}(z_1^{**},z_2^{**})\frac{z_2^{**}+\sqrt{{z_2^{**}}^2+4z_1^{**}}}{2}(\bar{z_2}-\tilde{z_2}). \label{eq: w_2-lma}
\end{align}
If $\bar{z_1}<\tilde{z_1}$ and $\bar{z_2}\le \tilde{z_2}$, then \eqref{eq: w_1-lma} is less than $0$, which is a contradiction. If $\bar{z_1}<\tilde{z_1}$ and $\bar{z_2} > \tilde{z_2}$, then \eqref{eq: w_2-lma} is less than $0$, which is also a contradiction. We can similarly derive contradictions in the other cases, which implies that $(\bar{z_1},\bar{z_2})= (\tilde{z_1},\tilde{z_2})$.
\end{proof}

\begin{remark}
Lemma \ref{lem:RI} enables us to write that for $(z_1,z_2)\in \Omega$, 
\begin{equation*}
    \lambda_i(z_1,z_2)=\left(\lambda_i \circ w^{-1}\right)\left(w_1(z_1,z_2),w_2(z_1,z_2)\right), \quad (i=1,2),
\end{equation*}
and ensures that each $\lambda_{i}$ has the same regularity with $\lambda_{i} \circ w^{-1}$. Henceforth, we identify $\lambda_{i}$ defined on $\Omega$ with $\lambda_{i} \circ w^{-1}$ defined on $w(\Omega)$, and regard $\lambda_{i}$ as a function of $(w_1,w_2)\in w(\Omega)$ as well as $(z_1,z_2)\in \Omega$.
\end{remark} 
Defining a pair of functions $(P,Q):\mathbb{R} \times [0,\infty) \rightarrow w(\Omega)$ by
\begin{equation*}\label{eq: PQ}
    P(x,t)=w_1(\rho(x,t),q(x,t)), \quad Q(x,t)=w_2(\rho(x,t),q(x,t)),
\end{equation*}
we can check that $(P,Q)$ solves
\begin{equation}\label{pde: PQ}
    \left\{
    \begin{aligned}
        &\partial_{t} P + \lambda_{2}(\rho,q) \partial_{x} P =0 ,\\
        &\partial_{t} Q + \lambda_{1}(\rho,q) \partial_{x} Q =0,
    \end{aligned}
    \right.
\end{equation}
with the aid of \eqref{RE: p1}. Since this is Theorem 1 in Section 11.3 of \cite{Eva}, we omit the details.

For the proof of Theorem \ref{thm: blow-1}, we need two key ingredients.
\begin{lemma}\label{lma: dlt_0}
There exists a universal constant $\delta_0 >0$ such that 
\begin{equation*}
    \frac{\partial\lambda_{2}}{\partial w_1}(P(x,t),Q(x,t))\ge \delta_0 \quad \text{in} \;\, \mathbb{R} \times [0,\infty).
\end{equation*}
\end{lemma}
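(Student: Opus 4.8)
The plan is to combine an explicit, manifestly positive formula for $\frac{\partial\lambda_{2}}{\partial w_{1}}$ on $w(\Omega)$ with the fact that the transported pair $(P,Q)$ stays, uniformly in $(x,t)\in\mathbb{R}\times[0,\infty)$, inside a fixed compact subset of $w(\Omega)$ on which that positive continuous function is bounded below.

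\emph{Step 1: a positive formula.} Since $(w_{1},w_{2})$ is a coordinate system on $\Omega$, the vector $\partial z/\partial w_{1}$ (holding $w_{2}$ fixed) is tangent to the level curves of $w_{2}$, which by \eqref{RE: p1} are the integral curves of $r_{2}$; normalising so that $\nabla w_{1}\cdot(\partial z/\partial w_{1})=1$ gives $\partial z/\partial w_{1}=r_{2}/(\nabla w_{1}\cdot r_{2})$, hence
\begin{equation*}
    \frac{\partial\lambda_{2}}{\partial w_{1}}=\frac{\nabla_{z}\lambda_{2}\cdot r_{2}}{\nabla_{z}w_{1}\cdot r_{2}}.
\end{equation*}
Plugging in $\nabla_{z}\lambda_{2}=(z_{2}^{2}+4z_{1})^{-1/2}(1,\lambda_{2})$ (immediate from $\lambda_{2}=\tfrac12(z_{2}+\sqrt{z_{2}^{2}+4z_{1}})$), $r_{2}=(\lambda_{2},1)$, $\nabla_{z}w_{1}=f_{1}(1,-\lambda_{1})$ (from \eqref{w_1}), and $\lambda_{2}-\lambda_{1}=\sqrt{z_{2}^{2}+4z_{1}}$, and identifying $(z_{1},z_{2})$ with $(\rho,q)$, I obtain
\begin{equation*}
    \frac{\partial\lambda_{2}}{\partial w_{1}}=\frac{2\lambda_{2}/\sqrt{q^{2}+4\rho}}{f_{1}\sqrt{q^{2}+4\rho}}=\frac{2\lambda_{2}}{f_{1}\,(q^{2}+4\rho)}.
\end{equation*}
Each factor is strictly positive on $\Omega$: $q^{2}+4\rho>0$; $\lambda_{2}>0$ since $\sqrt{q^{2}+4\rho}>|q|\ge -q$; and $f_{1}>0$ (established in the proof of Lemma \ref{lem:RI}). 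So $\frac{\partial\lambda_{2}}{\partial w_{1}}$ is a strictly positive continuous function on $\Omega\cong w(\Omega)$.

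\emph{Step 2: compact confinement of $(P,Q)$.} By \eqref{asm-1} and \eqref{asm-3}, the image of $(\rho_{0},q_{0})$ — with $q_{0}=c_{0}^{-1}\partial_{x}c_{0}$ bounded because $c_{0}\ge\beta_{2}>0$ and $\partial_{x}c_{0}$ attains its extrema — lies in a compact box $K_{0}=[\rho_{\min},\rho_{\max}]\times[q_{\min},q_{\max}]\Subset\Omega$ with $\rho_{\min}>0$, so $(P_{0},Q_{0})=w(\rho_{0},q_{0})$ takes values in the compact set $w(K_{0})\subset w(\Omega)$; in particular $P_{0}\in[a_{1},b_{1}]$, $Q_{0}\in[a_{2},b_{2}]$. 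By \eqref{pde: PQ}, $P$ and $Q$ are each constant along their own characteristics, which are globally defined on $[0,\infty)$ because — under the standing contradiction hypothesis \eqref{f-r1-asm-1} and Remark \ref{rmk: reg.-1} — the solution is smooth with $\rho,c$ bounded above and below on every $[0,T]$, so $\lambda_{1}(\rho,q),\lambda_{2}(\rho,q)$ are Lipschitz in $x$ and locally bounded in $t$. Hence $P(x,t)\in[a_{1},b_{1}]$, $Q(x,t)\in[a_{2},b_{2}]$ for all $(x,t)$. To see this box is compactly contained in $w(\Omega)$, I would use that $w(\Omega)$ is the product region $(A_{1},\infty)\times(-\infty,A_{2})$: each $w_{i}$ is constant on the integral curves of $r_{i}$, each of which is a graph $z_{1}=\phi_{i}(c,z_{2})$ ($c>0$) that stays in $\Omega$ and crosses $\{z_{2}=0\}$ once, with $\partial_{z_{1}}w_{1}=f_{1}(\cdot,0)=e^{(\cdot)}$ and $\partial_{z_{1}}w_{2}=f_{2}(\cdot,0)=-e^{(\cdot)}$ there (so $\mathrm{range}\,w_{1}=(A_{1},\infty)$, $\mathrm{range}\,w_{2}=(-\infty,A_{2})$), while surjectivity onto the full product follows since any $r_{1}$-curve (decreasing in $z_{2}$) meets any $r_{2}$-curve (increasing in $z_{2}$) exactly once in $\Omega$. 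As $\rho_{\min}>0$ forces $a_{1}>A_{1}$ and $b_{2}<A_{2}$, the box $[a_{1},b_{1}]\times[a_{2},b_{2}]$ is a compact subset of $w(\Omega)$, and
\begin{equation*}
    \delta_{0}:=\inf_{[a_{1},b_{1}]\times[a_{2},b_{2}]}\frac{\partial\lambda_{2}}{\partial w_{1}}>0
\end{equation*}
is the desired constant.

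The main obstacle I anticipate is exactly this upgrade in Step 2: the transport equations \eqref{pde: PQ} give uniform \emph{boundedness} of $(P,Q)$ almost for free, but turning it into uniform \emph{compact confinement inside $w(\Omega)$} requires excluding $(\rho,q)(x,t)$ drifting toward the degenerate part $\{\rho\to0,\ q\le0\}$ of $\partial\Omega$ (where $\lambda_{2}\to0$) or toward $\rho\to\infty$ or $|q|\to\infty$ (where $2\lambda_{2}/(q^{2}+4\rho)\to0$). The product-region description of $w(\Omega)$ makes this clean — reducing it to the elementary facts $a_{1}>A_{1}$, $b_{2}<A_{2}$ — so the real work is in pinning down the global behaviour of the characteristics $\phi_{1},\phi_{2}$ and hence the shape of $w(\Omega)$.
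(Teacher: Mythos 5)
Your argument is correct and follows the same two-step structure as the paper: first an explicit, manifestly positive formula for $\frac{\partial\lambda_2}{\partial w_1}$, then compactness of the set in which $(P,Q)$ is trapped. Your formula $\frac{2\lambda_2}{f_1(q^2+4\rho)}$ is literally the paper's $\frac{z_2+\sqrt{z_2^2+4z_1}}{f_1(z_1,z_2)(z_2^2+4z_1)}$ (since $2\lambda_2=z_2+\sqrt{z_2^2+4z_1}$); you get it via the chain rule $\frac{\partial\lambda_2}{\partial w_1}=\frac{\nabla_z\lambda_2\cdot r_2}{\nabla_z w_1\cdot r_2}$, whereas the paper inverts the Jacobian matrix $\nabla w$ — same content, slightly different bookkeeping.

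The genuine difference is in Step 2. The paper observes that $P(\mathbb{R}\times[0,\infty))=P_0(\mathbb{R})$ and $Q(\mathbb{R}\times[0,\infty))=Q_0(\mathbb{R})$ are compact and immediately concludes the infimum is positive. As you correctly flag, this is a gloss: knowing each of $P,Q$ lives in a compact interval only puts $(P,Q)$ inside the rectangle $[a_1,b_1]\times[a_2,b_2]$, and one still needs this rectangle (or the closure of $(P,Q)(\mathbb{R}\times[0,\infty))$) to sit inside $w(\Omega)$ — otherwise the positive continuous function $\frac{\partial\lambda_2}{\partial w_1}$ could degenerate along a sequence approaching $\partial w(\Omega)$. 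Your observation that $w(\Omega)$ is a full product region $(A_1,\infty)\times(-\infty,A_2)$ — obtained by tracking the global $r_1$- and $r_2$-curves, noting each crosses $\{z_2=0\}$ once and any two curves from opposite families cross exactly once — closes this cleanly, reducing the compact confinement to $a_1>A_1$, $b_2<A_2$, which indeed follow from $\rho_{\min}>0$. So your proof is a more complete version of the paper's: same essential ideas and same formula, but with the compactness step actually justified. The part you appropriately label as remaining work (pinning down the asymptotics of $\phi_1,\phi_2$ and hence the product shape of $w(\Omega)$) is the only thing that would need to be written out in full; your sketch of it is correct.
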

\begin{proof}
To begin with, we claim that 
\begin{equation}\label{lammda/w}
  \frac{\partial\lambda_{2}}{\partial w_1}>0 \quad \text{on} \;\,  w(\Omega).
\end{equation}
Indeed, since \eqref{nabla w} provides us with
\begin{equation*}
    \left(
    \begin{aligned}
        &\frac{\partial z_{1}}{\partial w_{1}} \;\, \frac{\partial z_{1}}{\partial w_{2}} \\
        &\frac{\partial z_{2}}{\partial w_{1}} \;\, \frac{\partial z_{2}}{\partial w_{2}}
    \end{aligned}
    \right)
    =(\nabla w)^{-1}
    =\left(
    \begin{aligned}
        &\frac{z_2+\sqrt{z_{2}^{2}+4z_1}}{2f_{1}(z_1,z_2)\sqrt{z_{2}^{2}+4z_1}}
        \quad \frac{-z_2+\sqrt{z_{2}^{2}+4z_1}}{2f_{2}(z_1,z_2)\sqrt{z_{2}^{2}+4z_1}} \\
        &\frac{1}{f_{1}(z_1,z_2)\sqrt{z_{2}^{2}+4z_1}}
        \quad \frac{-1}{f_{2}(z_1,z_2)\sqrt{z_{2}^{2}+4z_1}}
    \end{aligned}
    \right),
\end{equation*}
we have
\begin{equation*}
    \frac{\partial\lambda_{2}}{\partial w_1}=\frac{\partial \lambda_{2}}{\partial z_{1}}\frac{\partial z_{1}}{\partial w_{1}}
    +\frac{\partial \lambda_{2}}{\partial z_{2}}\frac{\partial z_{2}}{\partial w_{1}}=\frac{z_{2}+\sqrt{z_{2}^{2}+4z_1}}{f_{1}(z_1,z_2)(z_{2}^{2}+4z_1)}>0.
\end{equation*}
Evaluating along the characteristics defined by each $\lambda_{i}(\rho,q)$ $(i=1,2)$, we obtain from \eqref{pde: PQ} that 
\begin{equation*}
P\left(\mathbb{R} \times [0,\infty)\right)=P_{0}\left(\mathbb{R}\right)=w_1\left(\rho_{0}\left(\mathbb{R}\right),q_{0}\left(\mathbb{R}\right)\right)=w_1\left(\rho_{0}\left(\mathbb{R}\right),\frac{\partial_{x}c_0}{c_0}\left(\mathbb{R}\right)\right),
\end{equation*}
and
\begin{equation*}
Q\left(\mathbb{R} \times [0,\infty)\right)=Q_{0}\left(\mathbb{R}\right)=w_2\left(\rho_{0}\left(\mathbb{R}\right),q_{0}\left(\mathbb{R}\right)\right)=w_2\left(\rho_{0}\left(\mathbb{R}\right),\frac{\partial_{x}c_0}{c_0}\left(\mathbb{R}\right)\right),
\end{equation*}
where $P_0(x):=P(x,0)$ and $Q_0(x):=Q(x,0)$. Since each $w_i$ is smooth and both $\rho_{0}\left(\mathbb{R}\right)$ and $\frac{\partial_{x}c_0}{c_0}\left(\mathbb{R}\right)$ are compact by the assumptions \eqref{asm-1}, \eqref{asm-2}, and \eqref{asm-3}, $P\left(\mathbb{R} \times [0,\infty)\right)$ and $Q\left(\mathbb{R} \times [0,\infty)\right)$ are also compact. Thus, \eqref{lammda/w} and the smoothness of $\frac{\partial\lambda_{2}}{\partial w_1}$ ensure the existence of the desired $\delta_0>0$.
\end{proof}

\begin{lemma}\label{lma: x_0}
Let $x_0$ be a point satisfying \eqref{asm-4}. Then
\begin{equation*}
    \partial_{x}P_0(x_0)<0.
\end{equation*}
\end{lemma}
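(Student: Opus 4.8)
The plan is to compute $\partial_x P_0(x_0)$ directly by the chain rule, using the explicit formulas \eqref{w_1} for $\nabla w_1$, and then to read off the sign from the two inequalities in \eqref{asm-4}. No clever idea is needed beyond bookkeeping of signs.

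First I would unwind the definitions. In this section $q_0 := \partial_x c_0 / c_0$, so $P_0(x) = w_1\big(\rho_0(x), q_0(x)\big)$ and hence
\[
\partial_x P_0(x_0) = \frac{\partial w_1}{\partial z_1}\big(\rho_0(x_0),q_0(x_0)\big)\,\partial_x\rho_0(x_0) + \frac{\partial w_1}{\partial z_2}\big(\rho_0(x_0),q_0(x_0)\big)\,\partial_x q_0(x_0).
\]
By \eqref{w_1} we have $\frac{\partial w_1}{\partial z_1} = f_1 > 0$ and $\frac{\partial w_1}{\partial z_2} = \frac{-z_2+\sqrt{z_2^2+4z_1}}{2}\,f_1 = -\lambda_1(z_1,z_2)\,f_1$. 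Since $z_1 > 0$ on $\Omega$ (which is where $\rho_0 > 0$ from \eqref{asm-1} is used), we have $\sqrt{z_2^2+4z_1} > |z_2| \ge z_2$, so $-\lambda_1 > 0$ and therefore both partial derivatives of $w_1$ are strictly positive on $\Omega$. Factoring out the positive quantity $f_1(\rho_0(x_0),q_0(x_0))$, the sign of $\partial_x P_0(x_0)$ is that of
\[
\partial_x\rho_0(x_0) - \lambda_1\big(\rho_0(x_0),q_0(x_0)\big)\,\partial_x q_0(x_0).
\]

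Second, I would compute $\partial_x q_0$. From $q_0 = \partial_x c_0 / c_0$,
\[
\partial_x q_0(x_0) = \frac{c_0(x_0)\,\partial_{xx}c_0(x_0) - (\partial_x c_0(x_0))^2}{c_0(x_0)^2},
\]
which is strictly negative by the second inequality in \eqref{asm-4}. Combining this with $\partial_x\rho_0(x_0) \le 0$ (the first inequality in \eqref{asm-4}) and $-\lambda_1 > 0$, we obtain $\partial_x\rho_0(x_0) - \lambda_1(\rho_0(x_0),q_0(x_0))\,\partial_x q_0(x_0) < 0$, and hence $\partial_x P_0(x_0) < 0$.

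There is no real obstacle in this lemma; the only points requiring care are the sign convention adopted in this section ($q = +\partial_x c/c$ rather than $-\partial_x c/c$, which flips how $\partial_x q_0$ relates to $\partial_{xx}c_0$) and the observation that $-\lambda_1 > 0$ throughout $\Omega$, so that the $z_2$-derivative of $w_1$ has a definite (positive) sign. Everything else is the chain rule and reading off inequalities from \eqref{asm-4}.
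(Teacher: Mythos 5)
Your proof is correct and matches the paper's argument: both apply the chain rule to $P_0 = w_1(\rho_0, q_0)$, use the explicit formulas \eqref{w_1} for $\nabla w_1$ together with $f_1>0$, and read off the sign from $\partial_x\rho_0(x_0)\le 0$, $\rho_0>0$, and $c_0\partial_{xx}c_0-(\partial_xc_0)^2<0$. The only cosmetic difference is that you identify $\tfrac{-z_2+\sqrt{z_2^2+4z_1}}{2}$ as $-\lambda_1$ before arguing its positivity, whereas the paper simply writes the expression out.
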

\begin{proof}
By \eqref{w_1}, \eqref{RE: p2}, \eqref{asm-1}, and \eqref{asm-4}, we compute at $x_0$
\begin{align*}
    \partial_{x}P_{0}&=\frac{\partial w_1}{\partial z_1} \, \partial_x \rho_0 + \frac{\partial w_1}{\partial z_2} \, \partial_x q_0 \\
    &= f_1(\rho_0,q_0) \,\partial_x\rho_0 + f_1(\rho_0,q_0) \, \frac{-q_0+\sqrt{q_0^2+4\rho_0}}{2}\,\frac{c_0\partial_{xx}c_0-(\partial_{x}c_0)^2}{c_0^2} <0.
\end{align*}
\end{proof}

We are now in the position to prove Theorem \ref{thm: blow-1}. We use a similar argument to the proof of Theorem 2 in section 11.3 of \cite{Eva}.
Let $x_{0}$ be the point from \eqref{asm-4}, and let $x_{\lambda_2}(t)$ be the characteristic curve defined by $\lambda_2(\rho,q)$ with initial value $x_{\lambda_2}(0)=x_0.$ Note that $P(x_{\lambda_2}(t),t)=P_{0}(x_0)$ for $t\ge0$.
For the simplicity, we write $\tilde{P}:=\partial_x P$, $\tilde{Q}:=\partial_x Q$, and $\tilde{P}_{\lambda_2}(t):=\partial_x P (x_{\lambda_2}(t),t)$. Then \eqref{pde: PQ} implies that
\begin{equation*}
    \partial_{t}\tilde{P} + \lambda_2\partial_{x}\tilde{P} + \frac{\partial\lambda_{2}}{\partial w_1} \tilde{P}^2 + \frac{\partial\lambda_{2}}{\partial w_2} \tilde{P} \tilde{Q}=0,
\end{equation*}
and
\begin{equation*}
    \partial_{t}Q + \lambda_{2} \partial_{x} Q = (\lambda_2 - \lambda_1)\tilde{Q}.
\end{equation*}
Combining these, we obtain
\begin{equation*}\label{eq: tilde PQ}
    \partial_{t}\tilde{P} + \lambda_2\partial_{x}\tilde{P} + \frac{\partial\lambda_{2}}{\partial w_1} \tilde{P}^2 + \left[\frac{1}{\lambda_2-\lambda_1}\frac{\partial\lambda_{2}}{\partial w_2}\left(\partial_{t}Q + \lambda_{2} \partial_{x} Q\right)\right] \tilde{P} =0.
\end{equation*}
Defining a function $\Phi(t)$ by 
\begin{equation*}\label{def: Phi}
    \Phi(t):=\exp{\left(\int_{0}^{t}\frac{1}{\lambda_2-\lambda_1}\frac{\partial\lambda_{2}}{\partial w_2}\left(\partial_{t}Q + \lambda_{2} \partial_{x} Q\right)\left(x_{\lambda_{2}}(s),s\right)\; ds\right)},
\end{equation*}
we have
\begin{align*}
    \Phi(t)&=\exp{\left(\int_{0}^{t}\frac{d}{ds}\Psi(s)\, ds\right) }=\exp{\left(\Psi(t)-\Psi(0)\right)},
\end{align*}
where
\begin{equation*}
    \Psi (s) := \int_{0}^{Q(x_{\lambda_{2}}(s),s)}\frac{1}{\lambda_2-\lambda_1}\frac{\partial\lambda_{2}}{\partial w_2}\left(P_0(x_0),v\right) \,dv.
\end{equation*}
Since $\frac{1}{\lambda_2-\lambda_1}\frac{\partial\lambda_{2}}{\partial w_2}$ is smooth and  $Q\left(\mathbb{R} \times [0,\infty)\right)$ is compact as we saw in the proof of Lemma \ref{lma: dlt_0}, there exist two constants $m_{\Phi}$, $M_{\Phi}$ such that 
\begin{equation}\label{ine: Phi}
    0<m_{\Phi}\le \Phi(t) \le M_{\Phi} \quad \text{for all $t\ge0$.}
\end{equation}
Thus, multiplying $\tilde{P}_{\lambda_2}$ by $\Phi$ and differentiating it with respect to time, we obtain the following ODE:
\begin{equation*}
    \frac{d}{dt}\left(\tilde{P}_{\lambda_2}\Phi\right)= -\frac{\partial\lambda_{2}}{\partial w_1}\tilde{P}_{\lambda_2}^2\Phi,
\end{equation*}
so that Lemma \ref{lma: dlt_0} and \eqref{ine: Phi} yield
\begin{equation*}
    \frac{d}{dt}\left(\tilde{P}_{\lambda_2}\Phi\right) \le -\frac{\delta_0}{M_{\Phi}}  \left(\tilde{P}_{\lambda_2}\Phi\right)^2.
\end{equation*}
Since $\tilde{P}_{\lambda_2}(0)=\partial_{x}P_{0}(x_0)<0$ by Lemma \ref{lma: x_0}, there exists a finite time $T^*>0$ such that
\begin{equation}\label{contra-1}
    \tilde{P}_{\lambda_2}(t) \le \frac{M_{\Phi}\tilde{P}_{\lambda_2}(0)}{M_{\Phi}+\tilde{P}_{\lambda_2}(0)\delta_0 t} \rightarrow -\infty \quad as \quad t\rightarrow T^*.
\end{equation}
On the other hand, we claim that there exist two constants $m_{f_1}$, $M_{f_1}$ such that 
\begin{equation}\label{ine: f_1}
    0<m_{f_1}\le f_{1}(\rho(x,t),q(x,t)) \le M_{f_1} \quad for \quad (x,t) \in \mathbb{R}\times[0,\infty).
\end{equation}
Indeed, $f_{1}(\rho(x,t),q(x,t))=(f_1\circ w^{-1})(P(x,t),Q(x,t))$, so that \eqref{ine: f_1} follows from the smoothness of $f_1$ and the compactness of $P\left(\mathbb{R} \times [0,\infty)\right)$ and $Q\left(\mathbb{R} \times [0,\infty)\right)$ as in the proof of Lemma \ref{lma: dlt_0}.
Hence \eqref{f-r1-asm-1} and \eqref{ine: f_1} give
\begin{align*}
    \normif{\tilde{P}(T^*)} &= \normif{\partial_{x}P(T^*)} \nonumber\\
    &=\normif{f_{1}(\rho(T^*),q(T^*)) \left(\partial_x \rho(T^*) + \partial_{x}q(T^*) \frac{-q(T^*)+\sqrt{q^2(T^*)+4\rho(T^*)}}{2}\right)} < \infty,
\end{align*}
which is a contradiction to \eqref{contra-1}. This shows \eqref{fb-1-1}.

Now we prove the solution $(\rho,c)$ satisfies \eqref{fb-1-but bounded}.
Note that we have
\begin{equation*}
    {\left\Vert \rho(t)  \right\Vert}_{W^{1,\infty}(\mathbb{R})}  +{\left\Vert c(t)  \right\Vert}_{W^{2,\infty}(\mathbb{R})} < \infty
\end{equation*}
on $[0,T^*)$.
This ensures $(\rho,c)$ is smooth and unique, and has the positive lower bounds on $[0,T^*)$ by Remark \ref{rmk: reg.-1} and Remark \ref{rmk: reg.-2}. Then we again define
$(P,Q):\mathbb{R} \times [0,T^*) \rightarrow w(\Omega)$ by
\begin{equation*}\label{eq: PQ}
    P(x,t)=w_1(\rho(x,t),q(x,t)), \quad Q(x,t)=w_2(\rho(x,t),q(x,t)).
\end{equation*}
(In this time, we defined $(P,Q)$ on time interval $[0,T^*)$.)
Using the characteristic defined by 
each $\lambda_{i}(\rho,q)$ $(i=1,2)$, we can check from \eqref{pde: PQ} that 
\begin{equation*}
P\left(\mathbb{R} \times [0,T^*)\right)=P_{0}\left(\mathbb{R}\right)=w_1\left(\rho_{0}\left(\mathbb{R}\right),q_{0}\left(\mathbb{R}\right)\right)=w_1\left(\rho_{0}\left(\mathbb{R}\right),\frac{\partial_{x}c_0}{c_0}\left(\mathbb{R}\right)\right),
\end{equation*}
and
\begin{equation*}
Q\left(\mathbb{R} \times [0,T^*)\right)=Q_{0}\left(\mathbb{R}\right)=w_2\left(\rho_{0}\left(\mathbb{R}\right),q_{0}\left(\mathbb{R}\right)\right)=w_2\left(\rho_{0}\left(\mathbb{R}\right),\frac{\partial_{x}c_0}{c_0}\left(\mathbb{R}\right)\right),
\end{equation*}
so that $P\left(\mathbb{R} \times [0,T^*)\right)$ and $Q\left(\mathbb{R} \times [0,T^*)\right)$ are compact. Hence, noticing 
$(\rho,q)=w^{-1}\circ(P,Q)$ and $w:\Omega \rightarrow w(\Omega)$ is a global diffeomorphism,
$\rho\left(\mathbb{R} \times [0,T^*)\right) \times q \left(\mathbb{R} \times [0,T^*)\right)$ is also compact. This implies that
\begin{equation*}
    \sup_{t\in[0,T^*)}\left({\left\Vert \rho(t) \right\Vert}_{L^{\infty}(\mathbb{R})}  +{\left\Vert q(t)  \right\Vert}_{L^{\infty}(\mathbb{R})}\right) < \infty.
\end{equation*}
From $\partial_t c = -\mu c\rho,$
we have $\sup_{t\in[0,T^*)}\norm{c(t)}_{L^{\infty}(\mathbb{R})}<\infty.$
Recalling $q=\frac{\partial_x c}{c}$, we obtain
$\sup_{t\in[0,T^*)} \norm{\partial_x c (t)}_{L^{\infty}(\mathbb{R})}<\infty.$
This completes the proof of Theorem \ref{thm: blow-1}. $\Box$

\section{Finite-time blow-up in $\mathbb{R}^d$}\label{pf: fb-d}
In this section, we prove Theorem \ref{thm: blow-d} and Corollary \ref{cor: blow-d}. We firstly derive the finite speed of propagation of \eqref{H-KS}.
\begin{lemma}[Finite speed of propagation]\label{lemma: fps}
Let $(\rho_i,c_i)$ $(i=1,2)$ be two solutions of \eqref{H-KS} corresponding to initial data
$(\rho_{i,0},c_{i,0})$ satisfying \eqref{asm-1} such that
\begin{equation*}
    A:=1+\sup_{t\in[0,T^*)}\left(\norm{\rho_2(t)}_{L^{\infty}(\mathbb{R}^d)} + \norm{\nabla \log c_1(t)}_{L^{\infty}(\mathbb{R}^d)}\right) < \infty
\end{equation*}
and
\begin{equation*}
    \norm{\rho_i(t)}_{W^{1,\infty}(\mathbb{R}^d)} + \norm{c_i(t)}_{W^{2,\infty}(\mathbb{R}^d)} < \infty
\qquad (i=1,2)
\end{equation*}
for $t\in[0,T^*)$.
Suppose that $(\rho_{1,0},c_{1,0})=(\rho_{2,0},c_{2,0})$ in a ball $B_{A,T^*}(\bar{x})$, where
\begin{equation*}
    B_{A,T^*}(\bar{x}):=\left\{x\,:\,|x-\bar{x}|\le 6AdT^*\right\}.
\end{equation*}
Then
$(\rho_1,c_1)=(\rho_2,c_2)$ within a cone $K_{A,T^*}(\bar{x})$, where
\begin{equation*}
    K_{A,T^*}(\bar{x}):=\left\{(x,t) \,: \, 0\le t < T^*,\,|x-\bar{x}|\le 6
Ad(T^*-t)\right\}.
\end{equation*}
\end{lemma}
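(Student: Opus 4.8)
The plan is to localize the $L^2$-energy argument from the uniqueness proof of Section~\ref{pf: ex-uni} to the shrinking cross-sections of the cone, choosing the propagation speed $6Ad$ precisely so that the boundary terms generated by the integrations by parts are swallowed by the favorable flux term coming from the contraction of the domain. It suffices to prove the assertion on each time slab $[0,T]$ with $T<T^*$ and then let $T\uparrow T^*$, since every point of $K_{A,T^*}(\bar x)$ has time coordinate $<T^*$. On such a slab the hypotheses $\norm{\rho_i(t)}_{W^{1,\infty}(\mathbb{R}^d)}+\norm{c_i(t)}_{W^{2,\infty}(\mathbb{R}^d)}<\infty$, combined with Remark~\ref{rmk: reg.-1} and Remark~\ref{rmk: reg.-2}, guarantee that $(\rho_i,c_i)$ are smooth, unique, bounded, and bounded below by positive constants; in particular $q_i=-\nabla c_i/c_i$, $\nabla\cdot q_i$ and $\nabla\cdot(\rho_2 q_2)$ are all bounded on $[0,T]$.

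With the difference notation of \eqref{def: difference}, recall that $(\tilde\rho,\tilde q)$ solves \eqref{eq: tilde rho}--\eqref{eq: tilde q}. Writing $B(t):=\{x:|x-\bar x|\le 6Ad(T^*-t)\}$ for the time slice of the cone (so $B(0)=B_{A,T^*}(\bar x)$ and $\partial B(t)$ moves inward at normal speed $6Ad$), I would mimic the uniqueness proof and introduce the localized energy
\[
E(t):=\frac12\int_{B(t)}\bigl(\tilde\rho^2+\rho_2|\tilde q|^2\bigr)\,dx,
\]
for which the Reynolds transport formula for the moving domain gives
\[
\frac{d}{dt}E(t)=\frac12\int_{B(t)}\partial_t\bigl(\tilde\rho^2+\rho_2|\tilde q|^2\bigr)\,dx-\frac{6Ad}{2}\int_{\partial B(t)}\bigl(\tilde\rho^2+\rho_2|\tilde q|^2\bigr)\,dS .
\]
Expanding the interior integral exactly as in Section~\ref{pf: ex-uni} (the cross terms containing $\nabla\rho_2\cdot\tilde q$ cancel after one integration by parts), but now retaining every boundary contribution, one arrives at
\[
\frac{d}{dt}E(t)\le C(T)\int_{B(t)}\bigl(\tilde\rho^2+|\tilde q|^2\bigr)\,dx+\frac12\int_{\partial B(t)}\tilde\rho^2\,q_1\cdot n\,dS+\int_{\partial B(t)}\rho_2\,\tilde\rho\,\tilde q\cdot n\,dS-\frac{6Ad}{2}\int_{\partial B(t)}\bigl(\tilde\rho^2+\rho_2|\tilde q|^2\bigr)\,dS,
\]
where $C(T)$ collects $\norm{\nabla\cdot q_1}_{L^\infty([0,T])}$ and $\norm{\nabla\cdot(\rho_2 q_2)}_{L^\infty([0,T])}$ (both finite by the $W^{2,\infty}$-bounds on $c_i$, the $W^{1,\infty}$-bound on $\rho_2$, and the positive lower bound of $c_1$).

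The key step is to dispose of the boundary integrals. Using $|q_1\cdot n|\le\norm{\nabla\log c_1}_{L^\infty}\le A$, $\rho_2\le A$, and $2\rho_2|\tilde\rho|\,|\tilde q|\le\rho_2\tilde\rho^2+\rho_2|\tilde q|^2\le A\tilde\rho^2+\rho_2|\tilde q|^2$, the two boundary terms are bounded above by $\int_{\partial B(t)}\bigl(A\tilde\rho^2+\tfrac12\rho_2|\tilde q|^2\bigr)\,dS$, which — since $6Ad\ge 2A$ and $6Ad\ge1$ by $A\ge1$ and $d\ge1$ — is dominated by $\tfrac{6Ad}{2}\int_{\partial B(t)}\bigl(\tilde\rho^2+\rho_2|\tilde q|^2\bigr)\,dS$. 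Hence all boundary contributions drop and, replacing $|\tilde q|^2$ by $\rho_2|\tilde q|^2$ via the positive lower bound of $\rho_2$ on $[0,T]$, we get $\frac{d}{dt}E(t)\le C'(T)E(t)$. Because the data coincide on $B(0)=B_{A,T^*}(\bar x)$, we have $\tilde\rho_0\equiv0$ and $\tilde q_0=-\nabla\log c_{1,0}+\nabla\log c_{2,0}\equiv0$ on $B(0)$, so $E(0)=0$ and Gr\"onwall forces $E\equiv0$ on $[0,T]$; thus $\tilde\rho=0$ and $\tilde q=0$ throughout the truncated cone. Finally, fixing $(x,t_1)$ in the cone, the whole segment $\{(x,t):0\le t\le t_1\}$ lies in it (the cross-sections shrink), so on $[0,t_1]$ the equation for $\tilde c$ becomes $\partial_t\tilde c=-\mu(\tilde c\rho_1+c_2\tilde\rho)=-\mu\tilde c\rho_1$ with $\tilde c(x,0)=0$, whence $\tilde c(x,t_1)=0$ by Gr\"onwall in $t$. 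Letting $T\uparrow T^*$ completes the proof.

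The main obstacle is the boundary-term bookkeeping in the third step: one must carry along every boundary integral produced by the integrations by parts in the uniqueness computation and verify that the single negative flux term from the shrinking cone controls all of them simultaneously. This is precisely what pins down the admissible cone speed and the only place where the geometric constant $6Ad$ enters; everything else is a verbatim rerun of the Section~\ref{pf: ex-uni} estimate followed by a pointwise-in-$x$ Gr\"onwall argument for $\tilde c$.
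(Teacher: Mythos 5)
Your proposal is correct and follows essentially the same route as the paper: localize the $L^2$ difference energy of Section~\ref{pf: ex-uni} to the shrinking cone cross-sections $B_{A,T^*-t}(\bar x)$, use the transport/Leibniz rule for the moving domain to generate the favorable negative flux of size $3Ad\int_{\partial B}(\tilde\rho^2+\rho_2|\tilde q|^2)$, track the boundary terms produced by the integrations by parts, absorb them into that flux using $|q_1|\le A$ and $\rho_2\le A$, and close with Gr\"onwall before handling $\tilde c$ via $\partial_t\tilde c=-\mu\tilde c\rho_1$. The only cosmetic difference is that you write the boundary fluxes with $q_1\cdot n$ and $\tilde q\cdot n$ (giving slightly sharper constants), while the paper writes $\sum_k(q_1)_k$ and $\sum_k(\tilde q)_k$ and settles for cruder bounds $\tfrac{Ad}{2}$ and $2Ad$, but both are dominated by $3Ad$ so the argument is identical.
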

\begin{proof}
We use the notation \eqref{def: difference} in the proof.
Note that Remark \ref{rmk: reg.-1} and Remark \ref{rmk: reg.-2} imply our assumptions are sufficient to guarantee that the solution is smooth, unique, and has positive lower bounds on $[0,T^*)$.
We define a local energy
\begin{equation*}
    E(t):=\frac{1}{2}\left(\norm{\tilde{\rho}(t)}^2_{L^2\left(B_{A,T^*-t}(\bar{x})\right)} 
    +\norm{\sqrt{\rho_2}\tilde{q}(t)}^2_{L^2\left(B_{A,T^*-t}(\bar{x})\right)}\right),
\end{equation*}
where
\begin{equation*}
    B_{A,T^*-t}(\bar{x}):=\left\{x\,:\,|x-\bar{x}|\le 6Ad(T^*-t)\right\}.
\end{equation*}
Recalling \eqref{eq: tilde rho} and \eqref{eq: tilde q},
we compute
\begin{equation*}
    \frac{d}{dt} E(t) = \RN{1}+\RN{2}+\RN{3}+\RN{4} -3Ad \left(\int_{\partial B_{A,T^*-t}(\bar{x})} \Tilde{\rho}^2 + \int_{\partial B_{A,T^*-t}(\bar{x})} \rho_2|\Tilde{q}|^2 \right)
\end{equation*}
with
\begin{align*}
    \RN{1}&=\int_{B_{A,T^*-t}(\bar{x})} \tilde{\rho} \nabla\tilde{\rho}\cdot q_1+\int_{B_{A,T^*-t}(\bar{x})} \tilde{\rho}^2 \nabla \cdot q_1, &
    \quad \RN{2}&=\int_{B_{A,T^*-t}(\bar{x})}  \tilde{\rho}\nabla\rho_2\cdot \tilde{q} + \int_{B_{A,T^*-t}(\bar{x})} \tilde{\rho}\rho_2 \nabla \cdot \tilde{q},\\
    \RN{3}&=\frac{1}{2}\int_{B_{A,T^*-t}(\bar{x})}  |\tilde{q}|^2\nabla \cdot (\rho_2 q_2), &
    \quad \RN{4}&=\int_{B_{A,T^*-t}(\bar{x})} \rho_2 \tilde{q} \cdot \nabla \tilde{\rho}.
\end{align*}
Using the integration by parts on the first term of $\RN{1}$, we have
\begin{equation*}
\begin{split}
    \RN{1}&=\frac{1}{2}\int_{B_{A,T^*-t}(\bar{x})} \nabla\left(\tilde{\rho}^2\right) \cdot q_1
    +\int_{B_{A,T^*-t}(\bar{x})} \tilde{\rho}^2 \nabla \cdot q_1 \\
    &= -\frac{1}{2} \int_{B_{A,T^*-t}(\bar{x})} \tilde{\rho}^2 \nabla \cdot q_1 
    +\frac{1}{2} \int_{\partial B_{A,T^*-t}(\bar{x})} \tilde{\rho}^2 \left(\sum_{k=1}^d (q_1)_k\right)
    +\int_{B_{A,T^*-t}(\bar{x})} \tilde{\rho}^2 \nabla \cdot q_1 \\
    &\le  \frac{Ad}{2} \int_{\partial B_{A,T^*-t}(\bar{x})} \tilde{\rho}^2 +  \norm{q_1}_{W^{1,\infty}}\norm{\tilde{\rho}}^2_{L^2\left(B_{A,T^*-t}(\bar{x})\right)},
\end{split}
\end{equation*}
where in the second line, we used the notation $q_1=((q_1)_1,\cdots,(q_1)_d)$.
After integrating by parts, we employ the Cauchy inequality to obtain
\begin{equation*}
    \RN{2}+\RN{4}=\int_{\partial B_{A,T^*-t}(\bar{x})} \rho_2 \left(\sum_{i=1}^d (\Tilde{q})_i\right) \tilde{\rho} \le \int_{\partial B_{A,T^*-t}(\bar{x})} \rho_2\Tilde{\rho}^2 + \rho_2\left(\sum_{k=1}^d (\Tilde{q})_k\right)^2
    \le 2Ad\int_{\partial B_{A,T^*-t}(\bar{x})} \Tilde{\rho}^2 + \rho_2 |\Tilde{q}|^2.
\end{equation*}
For $\RN{3}$, we have
\begin{equation*}
    \RN{3} \le \normif{\rho_2^{-1}}\norm{\sqrt{\rho_2}\tilde q}^2_{L^2\left(B_{A,T^*-t}(\bar{x})\right)}\norm{\rho_2}_{W^{1,\infty}}\norm{q_2}_{W^{1,\infty}}.
\end{equation*}
Combining all, we arrive at
\begin{equation*}
    \frac{d}{dt}E(t) \le \left( \norm{q_1}_{W^{1,\infty}} + \normif{\rho_2^{-1}}\norm{\rho_2}_{W^{1,\infty}}\norm{q_2}_{W^{1,\infty}} \right) E(t)
\end{equation*}
for $t\in[0,T^*)$. But since $E(0)=0$, $E(t)=0$ for $t\in[0,T^*)$.
To show $\tilde c=0$ in $K_{A,T^*}(\bar{x})$, we consider the equation of $\Tilde{c}$:
\begin{equation*}
    \partial_t \Tilde{c} = -\mu\left(\Tilde{c}\rho_1 + c_2\Tilde{\rho}\right)=-\mu\Tilde{c}\rho_1,
\end{equation*}
where the second equality follows from $\Tilde{\rho}=0$ in $K_{A,T^*}(\bar{x})$.
With the same argument as above, we can show that $\tilde c(t)=0$ for $t\in[0,T^*)$.
\end{proof}

\begin{proof}[Proof of Theorem \ref{thm: blow-d}]
We divide our proof into $\mathbb{R}$ and $\mathbb{R}^d$ $(d\ge 2)$ cases.

\medskip

\textbf{Case 1. $\mathbb{R}$ :} 

\noindent We consider initial data $(\rho_0^1,c_0^1)$ defined by 
\begin{equation}\label{initial data 1D}
        \rho_0^1 := \Bar{\rho} +\psi \quad \text{and} \quad c_0^1 := \Bar{c} + \psi,
    \end{equation}
where $\bar{\rho},\,\bar{c}>0$ are constants to be determined later, and $\psi : \mathbb{R}\rightarrow[0,1]$ is a smooth bump function such that
    \begin{align*}
 \psi(x)=
     \begin{cases}
      1 \quad (|x| \le 1), \\
      0 \quad (|x|\ge 2).
     \end{cases}
 \end{align*}
 Then as we have seen in Remark \ref{rmk. bump function}, $(\rho_0,c_0)$ satisfies all of the conditions given in Theorem \ref{thm: blow-1}, so that the corresponding solution $(\rho^1,c^1)$ blows up at some time $T^*$. 
 The proof consists of two steps. In Step 1, we prove that the blow-up actually occurs on a bounded interval in $\mathbb{R}$. Then in Step 2, we rescale the solution to make it blow up within any given interval.

 $\textbf{Step 1.}$
 To get information for the region where the blow-up occurs, we make use of Lemma \ref{lemma: fps}. Noticing $(\rho_0^1,c_0^1)=(\Bar{\rho},\bar{c})$ on $\mathbb{R}\backslash (-2,2)$ 
 and the corresponding solution to $(\Bar{\rho},\bar{c})$ is $(\Bar{\rho},\bar{c}e^{-\mu \bar{\rho}t})$,
Lemma \ref{lemma: fps} implies $(\rho^1,c^1)=(\bar{\rho},\bar{c}e^{-\mu \bar{\rho}t})$ in
\begin{equation*}
    \left\{(x,t)\in (\mathbb{R}\backslash (-2,2))\times[0,T^*) \,:\, 
    x\le -2-6A^1t \;\, \text{or} \;\, x\ge 2+ 6A^1t\right\},
\end{equation*}
where
\begin{equation}\label{def: A1}
    A^1=1+\bar{\rho}+\sup_{t\in[0,T^*)}\norm{\nabla \log c^1(t)}_{L^{\infty}(\mathbb{R})}.
\end{equation}
(Note that $\sup_{t\in[0,T^*)}\norm{\nabla \log c^1(t)}_{L^{\infty}(\mathbb{R})}$ is clearly bounded due to \eqref{fb-1-but bounded} in Theorem \ref{thm: blow-1}.) Hence, we can conclude that our blow-up of $(\rho^1,c^1)$ occurs on $(-2-6A^1T^*,2+ 6A^1T^*)\times \left\{t=T^* \right\}$.
\begin{remark}\label{rmk: Step 1}
    In this Step 1, we have not given any restriction to $\Bar{\rho}$ and $\bar{c}$. In other words, for any constants $\Bar{\rho}, \Bar{c}>0$, we can show the blow-up region is bounded. This fact will be used in the proof of Corollary \ref{cor: blow-d}.
\end{remark}

$\textbf{Step 2.}$
It suffices to consider the case when the center of the interval is the origin since \eqref{H-KS} is translation invariant.
Let $(-r,r)$ be any interval. Our aim is to rescale $(\rho^1,c^1)$ so that the blowup occurs within $(-r,r)$. Since $(\rho^1,c^1)$ is a solution of \eqref{H-KS}, we can check that $(\rho_a^1,c_a^1)$ is also a solution of \eqref{H-KS}, where
\begin{equation*}
    \rho_a^1(x,t):= a^2\rho^1(ax,a^2t),\qquad c_a^1(x,t):=c^1(ax,a^2t) \qquad(a>0).
\end{equation*}
Then since 
\begin{equation*}
    \rho_a^1(x,0)= a^2(\bar{\rho}+\psi(ax)),\qquad c_a^1(x,0)=\bar{c}+\psi(ax),
\end{equation*}
and $(\rho_a^1(x,0),c_a^1(x,0))=(a^2\Bar{\rho},\bar{c})$ on $\mathbb{R}\backslash \left(-\frac{2}{a},\frac{2}{a}\right)$, Lemma \ref{lemma: fps} implies $(\rho_a^1,c_a^1)=(a^2\bar{\rho},\bar{c}e^{-\mu a^2\bar{\rho}t})$ in
\begin{equation*}
    \left\{(x,t)\in \left(\mathbb{R}\backslash \left(-\frac{2}{a},\frac{2}{a}\right)\right)\times\left[0,\frac{T^*}{a^2}\right) \,:\, 
    x\le -\frac{2}{a}-6A^1_at \;\, \text{or} \;\, x\ge \frac{2}{a}+ 6A^1_at\right\},
\end{equation*}
where
\begin{equation*}
    A^1_a:=a^2(1+\bar{\rho})+\sup_{t\in[0,\,\frac{T^*}{a^2})}\norm{\nabla \log c_a^1(t)}_{L^{\infty}(\mathbb{R})}
    =a^2(1+\bar{\rho})+a\sup_{t\in[0,T^*)}\norm{\nabla \log c^1(t)}_{L^{\infty}(\mathbb{R})}.
\end{equation*}
Hence, $(\rho_a^1,c_a^1)$ blows up on $\left(-\frac{2}{a}-\frac{6A^1_aT^*}{a^2},\frac{2}{a}+ \frac{6A^1_aT^*}{a^2}\right)\times \left\{t=\frac{T^*}{a^2} \right\}$.
Note that
\begin{equation*}
    \frac{2}{a}+ \frac{6A^1_aT^*}{a^2}= \frac{2}{a} + \bar{\rho} + \frac{6\sup_{t\in[0,T^*)}\norm{\nabla \log c^1(t)}_{L^{\infty}(\mathbb{R})}T^*}{a}.
\end{equation*}
Thus, if we choose sufficiently small $\bar{\rho}$ and large $a$, we can make the blow-up occur within $(-r,r)\in \mathbb{R}$.

\medskip

\textbf{Case 2. $\mathbb{R}^d$ $(d\ge2)$ :}

\noindent We shall use the same notation as the above Case 1.
We choose initial data $(\rho_{0},c_{0})$ defined by
\begin{equation}\label{eq: thm3-1}
    \rho_{0}(x):=\psi(x_1)\prod_{k=2}^{d}\psi(\delta x_k)+\bar{\rho}, \quad c_0(x):=\psi(x_1) \prod_{k=2}^{d}\psi(\delta x_k)+\bar{c},
\end{equation}
where $\bar{\rho},\,\bar{c}>0$ are any constants and $\delta>0$ is a small constant to be determined later. 
Note that $(\rho_0,c_0)$ satisfies \eqref{asm-1} and $(\nabla\rho_0,\nabla c_0)\in (H^{\infty} \times H^{\infty})(\mathbb{R}^d)$. 
We aim to show that the corresponding solution $(\rho,c)$ undergoes
\begin{equation*}
    \lim_{t\rightarrow T^*} \left( {\left\Vert \rho(t)  \right\Vert}_{W^{1,\infty}(\mathbb{R}^d)}  +{\left\Vert c(t)  \right\Vert}_{W^{2,\infty}(\mathbb{R}^d)} \right) =\infty
\end{equation*}
for some $T^*>0$.
Suppose, toward the contradiction, that $(\rho,c)$ satisfies
\begin{equation} \label{f-rd-asm-1}
    {\left\Vert \rho(t)  \right\Vert}_{W^{1,\infty}(\mathbb{R}^d)}  +{\left\Vert c(t)  \right\Vert}_{W^{2,\infty}(\mathbb{R}^d)} < \infty \quad \text{on} \;\, [0,T]
\end{equation} 
for any $T>0$.
Recall that \eqref{f-rd-asm-1} is sufficient to guarantee that $(\rho,c)$ has the positive lower bounds and is unique and smooth on $[0,T]$ by Remark \ref{rmk: reg.-1} and Remark \ref{rmk: reg.-2}.

On the other hand, we consider another initial data $(\rho_0^*,c_0^*):\mathbb{R}^d\rightarrow \Omega$ defined by
 \begin{equation*}\label{eq: thm3-2}
    \rho_0^*(x):=\rho_0^1(x_1), \quad c_0^*(x):=c_0^1(x_1).
\end{equation*}
Recalling \eqref{initial data 1D}, we can notice $(\rho_0,c_0)=(\rho_0^*,c_0^*)$ in  $\left\{x:|x_k|\le \delta^{-1},\,k=2,\cdots,d\right\}$.
Moreover, we observe that the unique solution $(\rho^*,c^*):\mathbb{R}^d\rightarrow \Omega$ corresponding to $(\rho_0^*,c_0^*)$ is
\begin{equation*}\label{eq: thm3-3}
    (\rho^*(x,t),c^*(x,t))=(\rho^1(x_1,t), c^1(x_1,t)).
\end{equation*}
Indeed, we can check that $(\rho^*,c^*)$ solves \eqref{H-KS} in $\mathbb{R}^d$ and the uniqueness is ensured by Remark \ref{rmk: reg.-2}. Hence, recalling Step 1 in the proof of Case 1, $(\rho^*,c^*)$ blows up in $\left\{|x|\le 2+6A^1T^*\right\}\times \left\{t=T^*\right\}$. 
Our strategy is to compare $(\rho,c)$ with $(\rho^*,c^*)$ by employing Lemma \ref{lemma: fps}.
Define $A$ by
\begin{equation*}
    A:=1+\sup_{t\in[0,\,T^*)}\left(\norm{\rho(t)}_{L^{\infty}(\mathbb{R}^d)} + \norm{\nabla \log c^*(t)}_{L^{\infty}(\mathbb{R}^d)}\right).
\end{equation*}
Note that $\sup_{t\in[0,\,T^*)}\norm{\rho(t)}_{L^{\infty}(\mathbb{R}^d)}<\infty$
by our assumption \eqref{f-rd-asm-1}, and
$\sup_{t\in[0,\,T^*)}\norm{\nabla \log c^*(t)}_{L^{\infty}(\mathbb{R}^d)}<\infty$ 
by \eqref{fb-1-but bounded}.
Then Lemma \ref{lemma: fps} asserts that if $(\rho_0,c_0)=(\rho_0^*,c_0^*)$ in $\left\{x:|x|\le 2+6A^1T^*+ 6AdT^*\right\}$, 
then
$(\rho,c)=(\rho^*,c^*)$ within $\left\{(x,t) :  0\le t < T^*,\,|x|\le 2+6A^1T^*+ 6
Ad(T^* -t)\right\}$, where $A^1$ is the constant defined in \eqref{def: A1}.

Since $(\rho_0,c_0)=(\rho_0^*,c_0^*)$ in $\left\{x:|x_k|\le \delta^{-1},\,k=2,\cdots,d\right\}$, 
if we choose sufficiently small $\delta$ satisfying 
\begin{equation*}
    2+6A^1T^*+ 6AdT^* \le \delta^{-1},
\end{equation*}
Lemma \ref{lemma: fps} guarantees $(\rho,c)=(\rho^*,c^*)$ within $\left\{(x,t) :  0\le t < T^*,\,|x|\le 2+6A^1T^*+ 6
Ad(T^*-t)\right\}$. But since $(\rho^*,c^*)$ blows up in $\left\{|x|\le 2+6A^1T^*\right\}\times \left\{t=T^*\right\}$, this is a contradiction to \eqref{f-rd-asm-1}.
This completes the proof of Theorem \ref{thm: blow-d}.
\end{proof}

\begin{proof}[Proof of Corollary \ref{cor: blow-d}]
We construct the desired initial data 
by slightly changing \eqref{eq: thm3-1}: we redefine
$(\rho_0,c_0)$ as follows:
\begin{equation*}
    \rho_{0}(x):=\delta^N\psi(x_1) \prod_{k=2}^{d}\psi(\delta x_k)+\bar{\rho}, \quad c_0(x):=\delta^N\psi(x_1)\prod_{k=2}^{d}\psi(\delta x_k)+\bar{c}.
\end{equation*}
for large $N$. Here $\bar{\rho},\,\bar{c}>0$ are any constants.
Recalling our proof of the above Case 2, we needed the smallness of $\delta$. Thus we can control $\delta,N$ so that our new $(\rho_0,c_0)$ satisfies \eqref{asm: coro}, and then proceed as we did in the above proof of Case 2 to obtain the finite blow-up result. 
We omit the details.
\end{proof}

\medskip

\subsection*{Acknowledgments}{The author was supported by the Samsung Science and Technology Foundation under Project Number SSTF-BA2002-04. He thanks In-Jee Jeong for educational discussions and comments.}

\bibliographystyle{amsplain}
\bibliography{Keller-Segel}

\end{document}